\documentclass[11pt]{article}
\usepackage[a4paper,margin=1in]{geometry}
\usepackage{amsmath,amssymb,amsthm}
\usepackage{bm}
\usepackage{graphicx}
\usepackage{booktabs}
\usepackage{mathtools}
\usepackage{float}
\usepackage[hidelinks]{hyperref}

\usepackage{natbib}
\setcitestyle{authoryear,round,open={(},close={)}}
\usepackage{xcolor}
\usepackage{hyperref}
\usepackage[shortlabels]{enumitem}
\hypersetup{colorlinks=true,linkcolor=blue,citecolor=blue,urlcolor=blue}

\usepackage[ruled,vlined,linesnumbered]{algorithm2e}
\SetKwInput{Input}{Input}
\SetKwInput{Output}{Output}

\newtheorem{theorem}{Theorem}
\newtheorem{lemma}[theorem]{Lemma}
\newtheorem{proposition}[theorem]{Proposition}
\newtheorem{corollary}[theorem]{Corollary}
\theoremstyle{definition}
\newtheorem{definition}[theorem]{Definition}
\theoremstyle{remark}

\usepackage{amsthm}




\long\def\amirali#1{{\color{black}#1}}

\long\def\aaa#1{{\color{black}#1}}
\long\def\gh#1{{\color{black}#1}}

\long\def\amral#1{{\color{black}#1}}
\long\def\ghall#1{{\color{black}#1}}

\title{On Approximate Computation of Critical Points}
\author{Amir Ali Ahmadi\thanks{Amir Ali Ahmadi is with the Department of Operations Research and Financial Engineering at Princeton University. Email: \href{mailto:aaa@princeton.edu}{\texttt{aaa@princeton.edu}}\\
Amir Ali Ahmadi was partially supported by the Sloan Fellowship, the Princeton AI Lab Seed Grant, the Princeton SEAS Innovation Grant, and a Research Gift in Mathematical Optimization.} \and Georgina Hall\thanks{Georgina Hall is with the Decision Sciences Area at INSEAD. Email: \href{mailto:georgina.hall@insead.edu}{\texttt{georgina.hall@insead.edu}}}}
\date{}

\date{}

\begin{document}
\maketitle

\begin{abstract}
We show that computing even very coarse approximations of critical points is intractable for simple classes of nonconvex functions. More concretely, we prove that if there exists a polynomial-time algorithm that takes as input a polynomial in $n$ variables of constant degree (as low as three) and outputs a point whose gradient has Euclidean norm at most $2^n$ whenever the polynomial has a critical point, then P=NP. The algorithm is permitted to return an arbitrary point when no critical point exists. We also prove hardness results for approximate computation of critical points under additional structural assumptions, including settings in which existence and uniqueness of a critical point are guaranteed, the function is lower bounded, and approximation is measured in terms of distance to a critical point. \amirali{Overall, our results stand in contrast to the commonly-held belief that, in nonconvex optimization, approximate computation of critical points is a tractable task.}
\end{abstract}

\textbf{Keywords:} critical points, nonconvex optimization, hardness of approximation

\section{Introduction} \label{sec:intro}
\aaa{A \emph{critical point} (or a \emph{stationary point}) of a differentiable function is a point at which its gradient vanishes. The problem of computing a critical point is one of the most fundamental algorithmic tasks in mathematical optimization. One key reason for this is that any global (or even local) minimizer of a differentiable function must necessarily be a critical point.
Therefore, the task of finding a critical point is often viewed as a prerequisite to that of finding minimizers. Unsurprisingly, structural properties of a function---such as convexity or the Polyak-Łojasiewicz property---that make this task sufficient for finding global minimizers often gain prominence in optimization theory.  }





\aaa{Over the years, numerous algorithms have been proposed in the optimization literature to search for critical points under various assumptions;} see, e.g., \citep{bertsekas1997nonlinear,boyd2004convex,nocedal2006numerical,cartis2022evaluation} and references therein. Perhaps the two best-known such algorithms are gradient descent, which forms the computational backbone of much of modern machine learning, and Newton's method, which is a key component of interior point methods. Naturally, critical points are fixed points for both of these algorithms. In the case of Newton’s method, the aim of each iteration is, in fact, also to find a critical \amirali{point, namely that of the degree-two Taylor approximation to the original function at the current iterate.}

Critical points---and more generally, local notions of optimality---have taken on even greater prominence in the optimization community in recent years, due to the paradigm shift from convex to nonconvex optimization, driven largely by machine learning. Indeed, in the nonconvex setting, it is well understood that global minimization is \amirali{often} an NP-hard problem. A common misconception, however, is that this hardness stems primarily from the existence of spurious local minima, i.e., local minima that are not global and that can trap optimization algorithms. This view overlooks the fact that \aaa{finding even a local minimum is an NP-hard task already for low-degree polynomials~\citep{ahmadi2022complexityqp}. The same complexity statement holds for recognizing whether a point is a local minimum~\citep{murty1985some}.}
Among researchers aware of the hardness results for local optimality, a commonly heard justification for the difficulties encountered is the presence of saddle points or other critical points that hinder algorithmic progress. This is well illustrated, e.g., by a quote from\amirali{~\citep{ge2015escaping}}: 
\begin{quote}
``The difficulty [of optimizing a nonconvex function] comes from two aspects. First, a nonconvex function may have many local minima and it might be hard to find the best one (global minimum) among them. Second, even finding a local minimum might be hard as there can be many saddle points which have $0$-gradient but are not local minima.'' 
\end{quote}

A widespread belief reflected in this quote is that finding a critical point should be relatively straightforward—--perhaps because critical points are easy to recognize, as the definition requires checking equations at a single point rather than an inequality over a neighborhood (as is the case for local minima), or perhaps because the aforementioned algorithms are expected to converge in a reasonable number of iterations. 

Running counter to this common belief, it was recently shown\amirali{,} via a \amirali{relatively simple polynomial-time reduction}
from the maximum-cut problem in combinatorial optimization\amirali{,} that deciding if a polynomial of degree 3 or higher has a critical point is NP-hard~\citep[Theorem 2.1]{ahmadi2022complexity}.\footnote{The same decision problem for degree-2 polynomials is \amirali{in P,} as it simply requires testing feasibility of a linear system of equations.} 
To reconcile this result with the prior belief, researchers have often made the following assumption: the statement of~\citep[Theorem~2.1]{ahmadi2022complexity} rules out (unless P=NP) the existence of a polynomial-time algorithm for finding an \emph{exact} critical point, and the difficulty likely stems from the need for exact computation. In practice, however, one is often content with \emph{approximate} computation of a critical point, which is assumed to be a much simpler task. Our goal in this paper is to examine this assumption. As it turns out, our findings go against this intuition.




%
%


\aaa{\subsection{Outline, contributions, and related literature}}


This paper is split into two sections, each covering hardness results relating to a different notion of approximate computation of a critical point of an $n$-variate differentiable function $f$. \gh{All of our results are in the standard Turing model of computation, where the function $f$ belongs to a specific parametric function class (e.g., low-degree polynomials), which we specify in the formal statements of the theorems.} In Section~\ref{sec:eps.approx}, we consider the notion of \emph{$\epsilon$-approximate critical points}, that is, points where the 2-norm of the gradient of $f$ is \aaa{no more} than $\epsilon.$ We show in \gh{Theorem~\ref{thm:eps.approx.cubic}} that unless P=NP, no polynomial-time algorithm can compute an approximate critical point of even extremely poor quality (more precisely, with $\epsilon=2^n$). Showing this hardness of approximation result is considerably more involved than showing hardness of exact computation of a critical point as done in~\citep[Theorem 2.1]{ahmadi2022complexity}. 
\aaa{In a discussion that follows the proof of Theorem~\ref{thm:eps.approx.cubic}, we argue that the problem of computing \amirali{a $2^n$-approximate}
critical point remains intractable for functions that are known to have a unique critical point.} \aaa{In Proposition~\ref{thm:critical.point.spurious}, we show that existence of a polynomial-time algorithm for finding a critical point of a lower bounded function with no spurious critical points (and hence no spurious local minima) implies PPAD=P.} \aaa{Finally, in} Theorem~\ref{thm:eps.approx.quartic}, we show that
\aaa{even for lower bounded functions that have a critical point in the unit hypercube}, unless P=NP, no polynomial-time algorithm can find a $2^n$-approximate critical point within a ball of radius $2^n$. In Section~\ref{sec:eps.near}, we consider the notion of \emph{$\epsilon$-near critical points} of a function $f$, which are points that are within a distance $\epsilon$ from some critical point of $f$. Our hardness result in Theorem~\ref{thm:lower.bound.approx.cubic} 
precludes the existence of algorithms that compute $\epsilon$-near critical points in time polynomial in $n$ and $\frac{1}{\epsilon}$. Our final result, Theorem~\ref{thm:lower.bound.approx}, shows that the same statement holds even when the function $f$ is lower bounded.
\aaa{Some concluding remarks are presented in Section~\ref{sec:discussion.concl}.}

\gh{We end our introduction by reviewing related literature on hardness of computation of approximate critical points (i.e., the notion considered in Section \ref{sec:eps.approx}). We are not aware of any results in the literature regarding hardness of computation of near critical points, discussed in Section \ref{sec:eps.near}. In \citep{carmon2020lower}, the authors study an oracle-based complexity model in which access to a function is provided through evaluations of the function and its derivatives up to a certain order at queried points. They establish lower bounds on the number of queries required by any algorithm in this model to compute an $\epsilon$-approximate critical point. In \citep{carmon2021lowerii}, the authors derive analogous lower bounds under the additional assumption of convexity. In both of these papers, the hard instances constructed by the authors have a domain whose dimension grows with $\frac{1}{\epsilon}$. In \citep{hollender2023computational,chewi2023complexity}, the authors also provide lower bounds on the number of queries needed in the oracle model to recover an $\epsilon$-approximate critical point, but they do so in fixed and low dimension. The work in \citep{hollender2023computational} further presents hardness results for computation of approximate critical points in a model they refer to as the ``white box'' model. In this model, one is given access to Turing machines which can compute, given an input point $x$, the evaluation of a function $f$ or of its gradient at $x$ in time polynomial in the size of $x$. They show that, within this model, \amral{in any fixed dimension $n\geq 2$, the problem of finding an approximate critical point is complete for a complexity class known as polynomial local search (PLS), which lies between (\amirali{the} functional versions of) P and NP.}
In contrast, we do not work in either the oracle, nor the white-box model, but in the Turing model of computation as described above, where the input is the parameters of the function \amirali{instance} from a given (parametric) function class. Our hardness results rule out—unless P=NP—polynomial-time algorithms not only for methods based on function or derivative evaluations (such as gradient descent), but for any computational approach (e.g., those based on convex relaxations followed by rounding procedures). 
We note that hardness results in fixed dimension, as obtained by \citep{hollender2023computational}, are not possible in our setting, \amirali{where the function class of interest is polynomials.}
Indeed, if the dimension is fixed, \amirali{even the more general problem of solving} polynomial systems of equations and inequalities can be \amirali{accomplished} \amirali{using} quantifier elimination theory and related algorithms (see, e.g.,~\citep{grigor1988solving}) in time polynomial in their degrees and the size of their coefficients.
Finally, we remark that there are hardness results on computation of Karush-Kuhn-Tucker points, which are analogs of critical points for constrained optimization problems \citep{vavasis1993black,fearnley2022complexity,fearnley2025complexity}. 
}\\

\section{Complexity of computing an $\epsilon$-approximate critical point} \label{sec:eps.approx}


\ghall{Throughout the paper, we denote the gradient vector of a scalar-valued function $f$ by $\nabla f $.}

\begin{definition}[Critical point]
Let $f:\mathbb{R}^n\rightarrow \mathbb{R}$ be a differentiable function. A \aaa{\emph{critical point}} of $f$ is a point $x \in \mathbb{R}^n$ such that $\nabla f(x)=0.$ 
\end{definition}

As mentioned in the introduction, the paper considers two different notions of approximate computation of a critical point. In this section, we focus on the notion of \emph{$\epsilon$-approximate} critical points, which we define now. \aaa{Throughout the paper, the notation} $||\cdot||$ signifies the  2-norm.

\begin{definition}[\aaa{$\epsilon$-approximate critical point}]
\aaa{Let $f:\mathbb{R}^n\rightarrow \mathbb{R}$ be a differentiable function and \amirali{$\epsilon>0$}}. An \aaa{\emph{$\epsilon$-approximate critical point}} of $f$ is a point $x \in \mathbb{R}^n$ such that $||\nabla f(x)|| \leq \epsilon.$
\end{definition}
In this section, we present two results regarding \amirali{the} complexity of computing $\epsilon$-approximate critical points. \gh{As mentioned in the introduction, all }\aaa{of these results and subsequent ones are formulated in the standard Turing model of computation.} \aaa{In this model,} every instance of the problem has an input representable with a finite number of bits \aaa{(see, e.g.,~\citep{sipser1996introduction})}. Therefore, we consider differentiable functions that are finitely parametrized. Most of our \amirali{main} results involve polynomials of fixed degree (Theorems~\ref{thm:eps.approx.cubic}, \ref{thm:eps.approx.quartic}, \ref{thm:lower.bound.approx.cubic}). \amirali{Some} other results (Theorem~\ref{thm:lower.bound.approx} and Propositions~\ref{prop:crit.lb}, \ref{thm:critical.point.spurious}) involve a mixture of \amirali{polynomials of fixed degree and exponentials of polynomials of fixed degree.}
The input to our problem instances then \aaa{consists of} the coefficients of the monomials that build up these functions and are taken to be rational numbers. \aaa{We recall that a \emph{polynomial-time} algorithm is an algorithm whose running time is bounded by a polynomial in the size of the input; i.e., the number of bits required for a binary encoding of all these rational numbers.} We further recall that a \emph{pseudo-polynomial-time} algorithm is an algorithm whose running time is bounded by a polynomial in the \amirali{numeric} value of the input.

\subsection{Complexity of computing an $\epsilon$-approximate critical point of a general function} \label{subsec:eps.approx.no.lb}

We first state the main result of this section. Some of its implications are discussed after the proof.

\begin{theorem} \label{thm:eps.approx.cubic}
If there is an algorithm $\mathcal{A}$ that takes as input a cubic polynomial $p$ in $n$ variables, runs in polynomial time, and is such that\aaa{
\begin{enumerate}[(i)]
    \item  when $p$ has no critical points, $\mathcal{A}$ is free to return an arbitrary point,
    \item when $p$ has a critical point, $\mathcal{A}$ returns a $2^n$-approximate critical point, 
\end{enumerate}
then $P=NP.$}
\end{theorem}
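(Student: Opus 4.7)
The plan is to give a polynomial-time reduction from an NP-hard decision problem that produces a cubic polynomial $p$ in $n$ variables with the following quantitative gap: in YES instances, $p$ has a critical point; in NO instances, $\|\nabla p(x)\| > 2^n$ for every $x \in \mathbb{R}^n$. Given such a reduction, the NP-hard problem can be decided in polynomial time by running $\mathcal{A}$ on $p$ and checking whether the returned point $x^\star$ satisfies $\|\nabla p(x^\star)\| \leq 2^n$: in YES instances $\mathcal{A}$ must by hypothesis return such a point, while in NO instances no point whatsoever satisfies this bound. The gradient check itself is a polynomial-time rational arithmetic test, since $\mathcal{A}$ runs in polynomial time and thus outputs a rational vector of polynomial bit complexity. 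This chain of implications would force $P=NP$.

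To build the reduction, I would start from the NP-hardness of deciding critical-point existence for cubic polynomials~\citep[Theorem 2.1]{ahmadi2022complexity}, which produces a cubic polynomial $q$ in $n$ variables with $\mathrm{poly}(n)$-bit rational coefficients. The central quantitative step is to secure a uniform lower bound $\delta$ on $\inf_{x} \|\nabla q(x)\|$ in NO instances, with $\delta \geq 2^{-\mathrm{poly}(n)}$. This requires two ingredients: first, \emph{coercivity at infinity} of $\|\nabla q\|$, so that the infimum of the gradient norm is attained on a compact set — I would enforce this by engineering the reduction so that the leading degree-two parts of the components of $\nabla q$ share only the origin as a common zero, introducing auxiliary variables and carefully chosen cubic terms that preserve the YES/NO dichotomy while forcing this nondegeneracy; and second, a quantitative Łojasiewicz-type inequality on the compact region, yielding an explicit lower bound polynomially small in the degree and bit complexity of $q$. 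Once $\delta$ is in hand, I would amplify the gap by setting $p := \lceil 2^{n+1}/\delta \rceil \cdot q$. Since $\log_2 \lceil 2^{n+1}/\delta \rceil = \mathrm{poly}(n)$, the polynomial $p$ remains cubic with polynomial-bit coefficients, its critical-point set coincides with that of $q$, and its gradient norms are scaled by $\lceil 2^{n+1}/\delta \rceil$, so that in NO instances $\|\nabla p(x)\| > 2^n$ uniformly over $\mathbb{R}^n$.

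The main obstacle is establishing coercivity of $\nabla q$ at infinity while keeping $q$ cubic and simultaneously preserving the combinatorial reduction from the NP-hard source problem. Cubic is the lowest degree at which the critical-point problem is NP-hard, but it is also a degree at which gradients need not be coercive: the degree-two components of $\nabla q$ have no reason a priori to vanish simultaneously only at the origin, and standard compactification devices (penalties of the form $(\|x\|^2 - R^2)^2$, for instance) are degree four and therefore unavailable. Without this global control, $\|\nabla q(x)\|$ could approach zero as $\|x\| \to \infty$ even in NO instances, and no amount of scaling would produce the required gap. This is precisely the technical hurdle that distinguishes the approximate hardness proof from the comparatively simple exact hardness reduction of \citep[Theorem 2.1]{ahmadi2022complexity}.
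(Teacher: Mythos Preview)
Your high-level plan is correct and matches the paper's strategy: build a cubic $p$ with a gap (critical point in YES instances, $\|\nabla p\| > 2^n$ everywhere in NO instances), then decide the source problem by running $\mathcal{A}$ and testing the returned point's gradient. You also correctly identify the central obstacle: securing coercivity of the gradient map while staying cubic, so that a strictly positive infimum exists and can be bounded below.

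The gap is that you stop precisely at that obstacle. You say you ``would enforce'' coercivity by ``introducing auxiliary variables and carefully chosen cubic terms,'' and then appeal to a ``Ł{}ojasiewicz-type inequality'' for the quantitative lower bound, but neither step is carried out. Both are nontrivial. Effective Ł{}ojasiewicz constants for general polynomial systems can be doubly exponential in the number of variables, so a black-box invocation does not give $\delta \ge 2^{-\mathrm{poly}(n)}$; you would need to exploit the specific structure of your (as yet unspecified) system. And the auxiliary-variable construction that simultaneously keeps $p$ cubic, preserves the YES/NO dichotomy, and forces nondegeneracy at infinity is exactly the content you have deferred.

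The paper does not treat the exact-hardness reduction as a black box followed by Ł{}ojasiewicz. It reduces directly from \textsc{Clique} via an explicit cubic $p(x,y,z)$ in which $y$ and $z$ play the role of multiplier variables: the $(y,z)$-components of $\nabla p$ depend on $x$ alone, and their sum of squares is a concrete quartic $g(x)$ that is coercive in $x$ by construction (its leading form is positive definite). The lower bound on $\inf_x g(x)$ in NO instances is obtained not by Ł{}ojasiewicz but by a hands-on first- and second-order analysis of any minimizer $x^*$ of $g$, pinning $x^*$ to within $O(1/\mathrm{poly}(m))$ of a vertex of $\{\pm 1\}^m$, and then invoking an integrality gap at the vertices. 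The amplifying constant $C$ is chosen explicitly up front rather than derived from a post-hoc $\delta$. So your skeleton is right, but the two load-bearing steps---the auxiliary-variable construction and the effective lower bound---are missing, and these are exactly where the paper does its work.
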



\aaa{To prove this theorem, we give a polynomial-time reduction from the following decision problem known as \textsc{Clique}: Given} a graph $G$ on $m$ vertices and a positive integer $k \leq m$, decide whether $G$ has a clique of size $k$, i.e., a set of $k$ pairwise adjacent vertices. It is \aaa{well known} that \textsc{clique} is NP-complete \citep{karp1975computational}, \aaa{which implies in particular that, unless P=NP, this problem does not admit a polynomial-time algorithm.}


\aaa{Before presenting this reduction,} we \aaa{prove} a handful of \aaa{technical lemmas}. These (bar one) involve showing properties of a \aaa{quartic polynomial $g \mathrel:{\mathbb{R}^\amirali{m}}\rightarrow\mathbb{R}$}, whose roots are in a one-to-one mapping with cliques in a graph (as formalized in Lemma~\ref{prop:clique.zeros.g}). \aaa{This polynomial plays a crucial role in the the proof of Theorem~\ref{thm:eps.approx.cubic} as well as in the results of Sections~\ref{subsec:eps.approx.quartic}} and \ref{sec:eps.near}. We give its definition first. Let $G$ be a graph with $m$ vertices and adjacency matrix $A$ and let $x=(x_1,\ldots,x_m)$. For constants \aaa{$k\in\{1,\ldots,m\}$}, \aaa{$C\in\mathbb{R},$} and $N=160 m^2 \cdot (m^2+1),$ define 
\begin{equation}\label{eq:def.g}
\begin{small}
\begin{aligned}
g_{G,C,k}(x)\mathrel{\mathop{:}}&=C^2 \left(\left(\sum_{i=1}^m x_i-2k+m \right)^2 +N^2 \sum_{i=1}^m (x_i^2-1)^2 +\sum_i \sum_{j > i} (1-A_{ij})^2 (x_i+1)^2(x_j+1)^2\right).
\end{aligned}
\end{small}
\end{equation}
To simplify notation, we omit the dependence on $G,C,k$ and write simply $g$ \aaa{in \amirali{our} proofs.}

\begin{lemma}\label{prop:clique.zeros.g}
\aaa{A graph $G$ on $m$ vertices and with edge set $E$ has a clique of size $k$ if and only if the system of equations} 
\begin{equation} \label{eq:quad.eq}
\begin{cases}
&\sum_{i=1}^m x_i=2k-m\\
&x_i^2-1=0,~ i=1,\ldots,m\\
&(x_i+1)(x_j+1)=0, (i,j)\notin \amirali{E}
\end{cases}
\end{equation} is feasible, or equivalently, \aaa{if and only if the polynomial $g=g_{G,C,k}$ in (\ref{eq:def.g}) vanishes at some point.}
\end{lemma}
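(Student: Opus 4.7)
My plan is to reduce the lemma to two steps: (a) showing that $g$ vanishes at some point iff the system in \eqref{eq:quad.eq} is feasible, and (b) showing that \eqref{eq:quad.eq} is feasible iff $G$ has a clique of size $k$.

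For step (a), I would observe that, up to the constant $C^2$, the polynomial $g$ is a sum of squares in which each summand is precisely the square of a left-hand side of one of the equations in \eqref{eq:quad.eq}. Since squares are nonnegative, $g(x)\geq 0$ for all $x$, and (assuming $C\neq 0$) $g(x)=0$ if and only if every squared term vanishes at $x$, which is exactly the statement that $x$ satisfies the system. The weights $1$, $N^2$, and $(1-A_{ij})^2$ play no role here; they are only used later when $g$ is evaluated at points that do not satisfy the system.

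For step (b), the key interpretation is that $x_i=+1$ encodes ``vertex $i$ is in the selected set'' while $x_i=-1$ encodes ``not in the selected set.'' For the forward direction, given a clique $S\subseteq\{1,\ldots,m\}$ with $|S|=k$, set $x_i=+1$ if $i\in S$ and $x_i=-1$ otherwise. Then $x_i^2-1=0$ holds trivially, $\sum_i x_i=|S|-(m-|S|)=2k-m$, and for any non-edge $(i,j)\notin E$, at most one of $i,j$ lies in the clique $S$, so at least one of $x_i,x_j$ equals $-1$, whence $(x_i+1)(x_j+1)=0$.

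For the reverse direction, given a feasible $x^*$, the equations $x_i^2=1$ force $x_i^*\in\{-1,+1\}$. Define $S\mathrel{\mathop:}=\{i : x_i^*=+1\}$; the linear equation gives $2|S|-m=2k-m$, so $|S|=k$. Finally, for any $(i,j)\notin E$, the equation $(x_i^*+1)(x_j^*+1)=0$ forces at least one of $x_i^*,x_j^*$ to be $-1$, i.e., at least one of $i,j$ lies outside $S$; contrapositively, every pair of vertices in $S$ is joined by an edge, so $S$ is a clique of size $k$.

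There is no real obstacle here: the lemma is essentially the standard $\{\pm 1\}$-encoding of cliques as a polynomial system, packaged into a sum-of-squares certificate. The only subtlety worth flagging is the harmless case $C=0$, under which $g\equiv 0$ vanishes trivially; this is why the nontrivial content of the lemma lies in the equivalence with \eqref{eq:quad.eq} rather than with the vanishing of $g$ alone.
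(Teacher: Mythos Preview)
Your proposal is correct and follows exactly the argument the paper has in mind: the paper leaves the proof to the reader, noting only that solutions to \eqref{eq:quad.eq} are precisely indicator vectors of cliques of size $k$, which is what your step (b) spells out, and step (a) is the obvious sum-of-squares observation. Your flag about the degenerate case $C=0$ is a fair caveat, though in every application in the paper $C$ is taken strictly positive.
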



\aaa{The proof of this lemma is left to the reader who can check that the only possible solutions to~(\ref{eq:quad.eq}) are indicator vectors of cliques of size $k$ in $G$ (that is, vectors in $\{\pm 1\}^m$ with exactly $k$ entries equal to $+1,$ whose indices correspond to vertices of $G$ forming a clique of size $k$).}


\aaa{In the remainder of the paper, we let $sgn(x)$, for $x \in \mathbb{R}^m$, denote the vector in $\mathbb{R}^m$ whose $i$-th entry equals $1$ if $x_i \ge 0$ and $-1$ if $x_i < 0$. Our next lemma bounds the value of the polynomial $g$ near the corners of the unit hypercube.}

\begin{lemma} \label{lem:g.lb}
Let $\epsilon>0$, $G$ be a graph \aaa{on $m$ vertices}, \aaa{$k\in\{1,\ldots,m\}$}, \aaa{and} $C \in \mathbb{R}.$ Recall the definition of $g=g_{G,C,k}$ in \eqref{eq:def.g} and suppose that $\bar{x} \in \mathbb{R}^m$ satisfies $|\bar{x}_i^2-1| \leq \epsilon$ for $i=1,\ldots,m$. Then,
\begin{align*}
    g(\bar{x}) \geq g(sgn(\bar{x}))-48C^2m^2\cdot \epsilon.
\end{align*}
\end{lemma}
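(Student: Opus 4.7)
The plan is to compare the three summands of $g$ evaluated at $\bar{x}$ and at $s \mathrel{\mathop{:}}= \mathrm{sgn}(\bar{x})$ separately, showing that each contributes $O(C^2 m^2 \epsilon)$ to the difference $g(s) - g(\bar{x})$. The key preliminary observation is that, because $\bar{x}_i$ and $s_i$ have the same sign (or $\bar{x}_i = 0$, in which case $s_i = 1$), one has $|\bar{x}_i + s_i| = |\bar{x}_i| + 1 \geq 1$, and therefore
\[
|\bar{x}_i - s_i| \;=\; \frac{|\bar{x}_i^2 - 1|}{|\bar{x}_i + s_i|} \;\leq\; \epsilon.
\]
This makes $s$ an entrywise $\epsilon$-perturbation of $\bar{x}$, so one can hope to bound the change in each polynomial summand by a Lipschitz-style argument.

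I would first dispose of the regime $\epsilon \geq 1/4$ by noting that $g(s)$ admits a cheap uniform upper bound of $12 C^2 m^2$: the first summand at $s$ is at most $(2m)^2 = 4m^2$ (since $|\sum_i s_i - 2k + m|\leq 2m$), the middle summand vanishes at $s$, and the third summand is at most $\binom{m}{2}\cdot 16 \leq 8m^2$. Since $g(\bar{x})\geq 0$, the desired inequality follows immediately when $48 C^2 m^2 \epsilon \geq g(s)$. Thus we may assume $\epsilon \leq 1$, which in particular gives $|\bar{x}_i| \leq \sqrt{1+\epsilon}\leq \sqrt{2}$.

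Under $\epsilon \leq 1$, I would then bound each of the three summands in $C^{-2}(g(s)-g(\bar{x}))$:

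\emph{First summand.} Writing $a = \sum_i s_i - 2k + m$ and $b = \sum_i \bar{x}_i - 2k + m$, one has $|a-b|\leq m\epsilon$ by the pointwise bound above, while $|a+b| \leq 2m + \sqrt{2}\,m + 2m \leq 6m$; hence $|a^2 - b^2| \leq 6m^2\epsilon$.

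\emph{Second summand.} It equals $0$ at $s$ and is nonnegative at $\bar{x}$, so it contributes a nonpositive amount to $g(s)-g(\bar{x})$ (which is why the large constant $N^2$ is harmless here).

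\emph{Third summand.} Setting $u_i = (\bar{x}_i+1)^2$, $v_i = (s_i+1)^2$, one factors $v_i - u_i = (s_i-\bar{x}_i)(s_i+\bar{x}_i + 2)$ to get $|v_i-u_i|\leq 5\epsilon$ (using $|s_i|+|\bar{x}_i|+2\leq 5$), and then writes $v_iv_j - u_iu_j = v_i(v_j-u_j) + u_j(v_i-u_i)$ with $v_i\leq 4$ and $u_j\leq 6$, yielding $|v_iv_j - u_iu_j|\leq 50\epsilon$ per pair. Summing over $\binom{m}{2}\leq m^2/2$ pairs and using $(1-A_{ij})^2\leq 1$ gives a bound of $25m^2 \epsilon$.

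Adding the three contributions produces $g(s)-g(\bar{x}) \leq (6 + 0 + 25)\,C^2 m^2 \epsilon < 48 C^2 m^2 \epsilon$, proving the lemma. No step presents a genuine obstacle; the only subtlety is recognizing that the large constant $N$ in front of the middle summand never enters the estimate (because that summand is zero at $s$), so the bound is independent of $N$. The rest is bookkeeping of elementary constants, and the generous slack (31 versus 48) gives plenty of room for any reasonable choice of intermediate bounds.
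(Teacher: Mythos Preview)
Your proof is correct and follows essentially the same approach as the paper: first deduce $|\bar{x}_i - s_i|\le \epsilon$, then observe that the $N^2$ summand only helps (it is zero at $s$ and nonnegative at $\bar{x}$), and finally bound the change in the remaining two summands by $O(C^2 m^2 \epsilon)$ via elementary perturbation estimates. Your version is slightly tidier in two respects---the identity $|\bar{x}_i - s_i| = |\bar{x}_i^2-1|/|\bar{x}_i+s_i|$ is cleaner than the paper's square-root bounds, and you explicitly dispose of the regime $\epsilon \geq 1/4$ (the paper's proof tacitly uses $\epsilon\leq 1$)---but the strategy is the same.
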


\begin{proof}
	Let $\bar{x} \in \mathbb{R}^m$ be such that $|\bar{x}_i^2-1| \leq \epsilon$ for $i=1,\ldots,m.$ Fix $i \in \{1,\ldots,m\}$. Then, $1-\epsilon \leq (\bar{x}_i)^2 \leq 1+\epsilon.$ Since $0 \leq 1-\epsilon \leq 1$ and $1 \leq 1+\epsilon \leq 2$, we have
\begin{equation} \label{eq:eps}
1-\epsilon \leq \sqrt{1-\epsilon} \leq |\bar{x}_i| \leq \sqrt{1+\epsilon} \leq 1+\epsilon.
\end{equation} If $sgn(\bar{x}_i)>0,$ then $|\bar{x}_i|=\bar{x}_i$ and \eqref{eq:eps} implies $sgn(\bar{x}_i)-\epsilon \leq \bar{x}_i\leq sgn(\bar{x}_i)+\epsilon.$ Likewise if $sgn(\bar{x}_i)<0$, then $|\bar{x}_i|=-\bar{x}_i$ and \eqref{eq:eps} implies $sgn(\bar{x}_i)+\epsilon \geq \bar{x}_i\geq sgn(\bar{x}_i)-\epsilon$. Thus, $|\bar{x}_i-sgn(\bar{x}_i)|\leq \epsilon$. Letting $\epsilon_i=\bar{x}_i-sgn(\bar{x}_i)$, we write $
\bar{x}_i=sgn(\bar{x}_i)+\epsilon_i$ with $|\epsilon_i|\leq \epsilon.$ Using the fact that $N^2(\bar{x}_i^2-1)^2\geq 0$ for all $i=1,\ldots,m$, we have
\begin{align} \label{eq:lb.g}
g(\bar{x})&\geq  C^2 \left(  \left(\sum_{i=1}^m \bar{x}_i-2k+m \right)^2 +\sum_i \sum_{j > i} (1-A_{ij})^2 (\bar{x}_i+1)^2(\bar{x}_j+1)^2 \right)\aaa{.}
\end{align}
We now consider each term separately in the above sum replacing $\bar{x}_i=sgn(\bar{x}_i)+\epsilon_i$. \amirali{We can write}
\begin{align}
  &\left(\sum_{i=1}^m \bar{x}_i-2k+m \right)^2 =\left(\sum_{i=1}^m sgn(\bar{x}_i)+\sum_{i=1}^m\epsilon_i-2k+m \right)^2 \nonumber \\
  &\geq \left(\sum_{i=1}^m sgn(\bar{x}_i)-2k+m \right)^2-2\left|\sum_{i=1}^m \epsilon_i\right|\cdot \left|\sum_{i=1}^m sgn(\bar{x}_i)-2k+m  \right| \nonumber \\
  &\geq \left(\sum_{i=1}^m sgn(\bar{x}_i)-2k+m \right)^2-8m^2\epsilon, \label{eq:ineq}
\end{align}
where we have used the fact that $(a+b)^2\geq a^2-2|a|\cdot|b|$ for the first inequality, and the triangle inequality combined to $|\epsilon_i|\leq \epsilon$ and $k \leq m$ for the second. Likewise, we have
\begin{align*}
   &(\bar{x}_i+1)^2(\bar{x}_j+1)^2=\left((sgn(\bar{x}_i)+\epsilon_i+1)(sgn(\bar{x}_j)+\epsilon_j+1)\right)^2 \\
   &\geq (sgn(\bar{x}_i)+1)^2(sgn(\bar{x}_j)+1)^2\\
   &-2|sgn(\bar{x}_i)+1| \cdot |sgn(\bar{x}_j)+1| \cdot |\epsilon_i(sgn(\bar{x}_j)+1)+\epsilon_j(sgn(\bar{x}_i)+1)+\epsilon_i\epsilon_j|\\
   &\geq (sgn(\bar{x}_i)+1)^2(sgn(\bar{x}_j)+1)^2-40\epsilon,
\end{align*}
where we have used the fact that $((a+b)(c+d))^2=(ac+ad+bc+bd)^2\geq a^2c^2-2|a|\cdot|c|\cdot |ad+bc+bd|$ for the first inequality, and the triangle inequality \amirali{together with} the \amirali{facts} that $|\epsilon_i| \leq \epsilon \leq 1$, $|\epsilon_i \epsilon_j| \leq \epsilon^2\leq \epsilon$\amirali{,} and $|sgn(x_i^*)+1|\leq 2$ for any $i,j$ for the second. This implies that
$$\sum_i \sum_{j > i} (1-A_{ij})^2 (\bar{x}_i+1)^2(\bar{x}_j+1)^2\geq \sum_i \sum_{j > i} (1-A_{ij})^2(sgn(\bar{x}_i)+1)^2(sgn(\bar{x}_j)+1)^2-40m^2\cdot \epsilon.$$
Combining this inequality with \eqref{eq:ineq} in \eqref{eq:lb.g},
we get our result, i.e., 
\begin{small}
\begin{align*} 
    g(\bar{x}) &\geq C^2 \left( \left(\sum_{i=1}^m sgn(\bar{x}_i)-2k+m \right)^2 + \sum_i \sum_{j > i} (1-A_{ij})^2(sgn(\bar{x}_i)+1)^2(sgn(\bar{x}_j)+1)^2-48m^2\cdot \epsilon \right)\\
    &=g(sgn(\bar{x}))-48C^2m^2\cdot \epsilon.
\end{align*}
\end{small}
\end{proof}

\aaa{Our next lemma bounds the value of the polynomial $g$ on corners of the unit hypercube that do not correspond to cliques of a desired size.}

\begin{lemma} \label{prop:lb.g.s}
Let $G$ be a graph \aaa{on $m$ vertices}, \aaa{$k\in\{1,\ldots,m\}$}, \aaa{and} $C \in \mathbb{R}.$ Recall the definition of $g=g_{G,C,k}$ in \eqref{eq:def.g}. For any $s \in \{\pm 1\}^m$ such that \aaa{the support set} $\{i \in \{1,\ldots,m\}~|~s_i=1\}$ does not correspond to a clique of size $k$ in $G$, we have
$$g(s) \geq \frac{C^2}{(m^2+1)^2}.$$
\end{lemma}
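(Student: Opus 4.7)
The plan is to evaluate $g(s)$ explicitly for $s\in\{\pm 1\}^m$ and observe that the polynomial $g$ has a very special structure on the corners of the hypercube. Since $s_i^2 = 1$ for all $i$, the middle term $N^2 \sum_{i=1}^m (s_i^2-1)^2$ in \eqref{eq:def.g} vanishes, so we are left with
\begin{equation*}
g(s) = C^2 \left(\left(\sum_{i=1}^m s_i-2k+m\right)^2 + \sum_{i<j}(1-A_{ij})^2(s_i+1)^2(s_j+1)^2\right).
\end{equation*}
Let $S \mathrel{\mathop:}= \{i : s_i = 1\}$ and $k' \mathrel{\mathop:}= |S|$. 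Then $\sum_{i} s_i = 2k'-m$, so the first squared term equals $4(k'-k)^2$. For the second term, $(s_i+1)(s_j+1)$ equals $4$ when both $i,j\in S$ and $0$ otherwise, so the sum reduces to $16$ times the number of non-edges of $G$ whose endpoints both lie in $S$.

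The proof will then proceed by a case split based on why $S$ fails to be a $k$-clique of $G$. First, if $k' \neq k$, then $(k'-k)^2 \geq 1$ and hence $g(s) \geq 4C^2$. Second, if $k' = k$ but $S$ is not a clique, then there is at least one pair $i<j$ with $i,j\in S$ and $A_{ij}=0$, so the double sum contributes at least $16$, yielding $g(s) \geq 16 C^2$. Since both $4C^2$ and $16C^2$ are much larger than $C^2/(m^2+1)^2$ (the latter is at most $C^2$), the claimed inequality follows immediately in both cases.

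No step is difficult here; the only ``surprise'' is that the stated bound $C^2/(m^2+1)^2$ is considerably weaker than what the argument actually delivers (namely $\geq 4C^2$). Presumably this loose form is chosen for convenience when the lemma is combined with Lemma~\ref{lem:g.lb} in the proof of Theorem~\ref{thm:eps.approx.cubic}, where a bound of the form $C^2/\text{poly}(m)$ is what one needs to outweigh the $48 C^2 m^2 \epsilon$ slack term after appropriately choosing $\epsilon$ of order $1/\text{poly}(m)$. Thus no real obstacle arises in the proof itself.
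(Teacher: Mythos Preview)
Your proof is correct and in fact delivers a sharper bound than the paper's. The paper argues more indirectly: it observes that the quantity
\[
\left|\sum_{i=1}^m s_i-2k+m\right|+\sum_{i}\sum_{j >i} |(1-A_{ij})(s_i+1)(s_j+1)|
\]
is a nonnegative integer that cannot vanish (by Lemma~\ref{prop:clique.zeros.g}) and hence is at least $1$; then, since this sum has at most $m^2+1$ terms, some single term must be at least $1/(m^2+1)$, giving only $g(s)\geq C^2/(m^2+1)^2$. Your direct evaluation on $\{\pm1\}^m$ exploits more structure---namely that the first term equals $4(k'-k)^2$ and each nonzero summand in the double sum equals exactly $16$---and yields $g(s)\geq 4C^2$. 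This is simpler and strictly stronger; the paper's looser bound is presumably stated because it already suffices downstream, as you note.
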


\begin{proof}
Let $s \in \{\pm 1\}^m$ be such that \aaa{the set} $\{i \in \{1,\ldots,m\}~|~s_i=1\}$ does not correspond to a clique of size $k$ in $G$. It must be the case that
\begin{equation}\label{eq:gap}
    \left|\sum_{i=1}^m s_i-2k+m\right|+\sum_{i}\sum_{j >i} |(1-A_{ij})(s_i+1)(s_j+1)|\geq 1.
\end{equation}
Indeed, when $s \in \{-1,1\}^m,$ the previous expression only takes integer values and\amirali{, as a consequence of Lemma~\ref{prop:clique.zeros.g},} it cannot be zero.
As there are at most $m^2+1$ terms in the sum on the \amirali{left hand side} of \eqref{eq:gap}, \amirali{it} must be the case that either $|\sum_{i=1}^m s_i-2k+m|\geq \frac{1}{m^2+1}$\amirali{,} or $(1-A_{ij})|(s_i+1)(s_j+1)| \geq \frac{1}{m^2+1}$ for some $(i,j)$ \amirali{with} $j >i$. In other words, 
\begin{align}\label{eq:lb.or}
\left(\sum_{i=1}^m s_i-2k+m\right)^2\geq \frac{1}{(m^2+1)^2} \text{ or } (1-A_{ij})^2(s_i+1)^2(s_j+1)^2 \geq \frac{1}{(m^2+1)^2} \text{ for } (i,j),~j>i.
\end{align}
From \eqref{eq:def.g}, we get the conclusion.
\end{proof}

A final lemma, which does not involve \aaa{the polynomial} $g$, is needed to prove Theorem~\ref{thm:eps.approx.cubic}. \aaa{It is derived from a bound of \citep{fujiwara1916obere} on the magnitude of roots of univariate polynomials.}



\begin{lemma}\label{lem:bounds}
Consider the univariate function $f(t)=-\alpha_3 |t|^3+\alpha_2 |t|^2+\alpha_1 |t|+\alpha_0$ for $t \in \mathbb{R}$, where $\alpha_0,\alpha_1,\alpha_2,\alpha_3>0. $ If $t^* \in \mathbb{R}$ is such that $f(t^*) \geq 0$, then $$|t^*|\leq 2\max\left\{\frac{\alpha_2}{\alpha_3}, \sqrt{\frac{\alpha_1}{\alpha_3}},\sqrt[3]{\frac{\alpha_0}{2\alpha_3}}\right\}.$$
\end{lemma}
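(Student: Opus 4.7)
The plan is to argue by contradiction: assume $|t^*|$ exceeds the claimed bound, and then show that each of the three positive terms $\alpha_2|t^*|^2$, $\alpha_1|t^*|$, $\alpha_0$ is dominated by an appropriate fraction of $\alpha_3|t^*|^3$, with the fractions summing to $1$ so the total strictly under-shoots $\alpha_3|t^*|^3$ and contradicts $f(t^*)\ge 0$. Concretely, setting $s=|t^*|\ge 0$, the hypothesis $f(t^*)\ge 0$ reads
\begin{equation*}
\alpha_3 s^3 \;\le\; \alpha_2 s^2 + \alpha_1 s + \alpha_0,
\end{equation*}
and the goal is to show $s \le 2M$ where $M=\max\{\alpha_2/\alpha_3,\ \sqrt{\alpha_1/\alpha_3},\ \sqrt[3]{\alpha_0/(2\alpha_3)}\}$.

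First I would dispense with the trivial case $s=0$, and then assume $s > 2M$ toward a contradiction. In that regime, each of the three defining inequalities for $M$ gives a useful upper bound: from $s > 2(\alpha_2/\alpha_3)$ I get $\alpha_2 s^2 < \tfrac{1}{2}\alpha_3 s^3$; from $s > 2\sqrt{\alpha_1/\alpha_3}$ (squaring and multiplying by $s$) I get $\alpha_1 s < \tfrac{1}{4}\alpha_3 s^3$; and from $s > 2\sqrt[3]{\alpha_0/(2\alpha_3)}$ (cubing) I get $\alpha_0 < \tfrac{1}{4}\alpha_3 s^3$. The factor of $2$ inside the cube root is precisely what is needed to make the last fraction $\tfrac{1}{4}$ rather than $\tfrac{1}{2}$.

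Adding the three inequalities yields
\begin{equation*}
\alpha_2 s^2 + \alpha_1 s + \alpha_0 \;<\; \Bigl(\tfrac{1}{2} + \tfrac{1}{4} + \tfrac{1}{4}\Bigr)\alpha_3 s^3 \;=\; \alpha_3 s^3,
\end{equation*}
which contradicts the inequality $\alpha_3 s^3 \le \alpha_2 s^2 + \alpha_1 s + \alpha_0$ derived from $f(t^*)\ge 0$. Hence $s \le 2M$, as claimed. The whole argument is short and essentially a specialization of Fujiwara's root bound (applied to the cubic $\alpha_3 u^3 - \alpha_2 u^2 - \alpha_1 u - \alpha_0$ in $u=|t|$) to the region where the polynomial is nonnegative; there is no real obstacle beyond choosing the weights $\tfrac12,\tfrac14,\tfrac14$ correctly, which is dictated by the exponents $1,2,3$ and the factor $2$ in the third term of $M$.
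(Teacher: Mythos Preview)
Your proof is correct. The paper takes a slightly different route: it assumes $|t^*|>u$ (where $u$ is the claimed bound), uses the intermediate value theorem to produce an actual root $\bar t\ge t^*>u$ of the cubic $\bar f(t)=-\alpha_3 t^3+\alpha_2 t^2+\alpha_1 t+\alpha_0$, and then invokes Fujiwara's bound \citep{fujiwara1916obere} as a black box to derive a contradiction. You instead bypass the IVT step and prove the needed inequality directly---in effect unrolling the proof of Fujiwara's bound for this particular cubic. Your argument is more self-contained and marginally shorter; the paper's version has the advantage of making the provenance of the constant $2$ and the cube-root factor $1/2$ explicit by pointing to a named classical result.
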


\begin{proof}
Let $$u\mathrel{\mathop{:}}=2\max\left\{\frac{\alpha_2}{\alpha_3}, \sqrt{\frac{\alpha_1}{\alpha_3}},\sqrt[3]{\frac{\alpha_0}{2\alpha_3}}\right\}>0.$$ 
Suppose \amirali{there} exists $t^* \in \mathbb{R}$ \amirali{with} $f(t^*) \geq 0$
\amirali{and} $|t^*| > u$, i.e., $t^* >u $ or $t^*<-u$. Without loss of generality, we take $t^*>u$; otherwise, we simply consider $-t^*$ as $f(t^*)=f(-t^*)$. Let $\bar{f}(t)=-\alpha_3 t^3+\alpha_2 t^2+\alpha_1 t+\alpha_0$. From the intermediate value theorem, as $\bar{f}$ is continuous with $\bar{f}(t^*)=f(t^*) \geq 0$ and $\lim_{t\rightarrow \infty} \bar{f}(t)=- \infty$, it must be the case that $\bar{f}$ has a root $\bar{t}$ such that $\bar{t}\geq t^* >u$. This \amirali{contradicts} Fujiwara's bound \citep{fujiwara1916obere}.
\end{proof}

We are now ready to prove Theorem~\ref{thm:eps.approx.cubic}. This requires the introduction, on top of $x \in \mathbb{R}^m$, of two new vectors of variables:
$$y\mathrel{\mathop{:}}=(y_0,\ldots,y_m) \in \mathbb{R}^{m+1} \text{ and } z\mathrel{\mathop{:}}=(z_{12},\ldots,z_{1m},z_{23},\ldots z_{(m-1)m})\in \mathbb{R}^{m(m-1)/2}.$$

\begin{proof}[Proof of Theorem~\ref{thm:eps.approx.cubic}.]
We prove that such an algorithm would solve \textsc{Clique} in polynomial time. 
Let $G$ be a graph \amirali{on} $m$ \amirali{vertices} with adjacency matrix $A$ and let $k \in \{1,\ldots,m\}.$ Let $n \mathrel{\mathop{:}}=m+(m+1)+\frac{m(m-1)}{2}$, $N=160m^{2}\cdot (m^2+1)$\amirali{,} and set
$C=2^{n+1}(m^2+1)$. Define $p$ to be the $n$-variate cubic \amirali{polynomial}
\begin{small}
\begin{equation} \label{eq:def.p}
p(x,y,z)\mathrel{\mathop{:}}=C \left( y_0\left(\sum_{i=1}^m x_i-2k+m \right)+N \cdot \sum_{i=1}^m y_i \left(x_i^2-1\right)+\sum_{i}\sum_{j > i} z_{ij}(1-A_{ij})(x_i+1)(x_j+1)\right).
\end{equation}
\end{small}
The reduction from $(G,k)$ to $p$ is polynomial in length. 
%
%
\ghall{Indeed, all coefficients of $p$ are of the form $C\cdot O(n^2),$ thus requiring $\lceil \log_2(C) \rceil + \lceil \log_2(O(n^2))\rceil=O(n)$ bits to write down.} Furthermore, $p$ has $O(n^2)$ such coefficients. The gradient of $p$ is $$\nabla p(x,y,z)=\begin{pmatrix} \nabla_x p(x,y,z)\\ \nabla_y p(x,y,z) \\ \nabla_z p(x,y,z) \end{pmatrix},$$ where
\begin{equation} \label{eq:grad.p} 
\begin{aligned}
&\nabla_x p(x,y,z)=C \begin{pmatrix} y_0+2Ny_1x_1+\sum \limits_{j>1} z_{1j}(1-A_{1j})(x_j+1) \\ \vdots \\y_0+2Ny_mx_m+\sum\limits_{j>m} z_{mj}(1-A_{mj})(x_j+1) \end{pmatrix},\\
&\nabla_y p(x,y,z)= C \begin{pmatrix} \sum_{i=1}^m x_i-2k+m\\ N(x_1^2-1)\\ \vdots \\ N(x_m^2-1) \end{pmatrix},
\end{aligned}
\end{equation}
and finally, $\nabla_z p(x,y,z)$ has entries $C(1-A_{ij})(x_i+1)(x_j+1)$ for $(i,j)\in \{1,\ldots,m\}^2, i<j.$ 

We prove that if there is a clique of size $k$ in $G$, then $p$ has a critical point, and if there is no clique of size $k$ in $G$, then $p$ does not even have a $2^n$-approximate critical point. Once we have shown this claim, the theorem follows. To see this, let $(\bar{x},\bar{y},\bar{z}) \in \mathbb{R}^n$ be the point returned by algorithm $\mathcal{A}$. If $||\nabla p(\bar{x},\bar{y},\bar{z})||\leq 2^n$, then there is a clique of size $k$ in $G$ per the claim. If $||\nabla p(\bar{x},\bar{y},\bar{z})||>2^n$, then $p$ has no critical point (otherwise the algorithm would have returned a $2^n$-approximate critical point). Per the claim, there is no clique of size $k$ in $G$. Thus, \amirali{if an algorithm like $\mathcal{A}$ existed}, we would be able to solve $\textsc{Clique}$ in polynomial time, which would imply P=NP.

We first \amirali{observe} that if there is clique of size $k$ in $G$, then $p$ has a \amirali{critical} point. \amirali{Indeed, from} Lemma~\ref{prop:clique.zeros.g}, if there is a clique of size $k$ in $G$, \eqref{eq:quad.eq} is feasible. Letting $\bar{x}$ be \amirali{a} solution to \eqref{eq:quad.eq}, $\bar{y}=0,$ $\bar{z}=0$, we have \amirali{$\nabla p(\bar{x},\bar{y},\bar{z})=0$.}

Conversely, suppose that there is no clique of size $k$ in $G$.  We show that in this case,
\begin{align} \label{eq:lb.p}
||\nabla p(x,y,z)||> 2^n,\forall x,y,z.
\end{align}
Let $x,y,z \in \mathbb{R}^n$. We have $$
||\nabla p(x,y,z)||^2\geq ||\nabla_y p(x,y,z)||^2+||\nabla_z p(x,y,z)||^2=g(x),$$
where $g(x)$ is the polynomial defined in \eqref{eq:def.g}.
Note that $g$ is a quartic polynomial with highest-degree term $C^2 \left( N^2\sum_{i=1}^m x_i^4+\sum_{i}\sum_{j>i}(1-A_{ij})^2 x_i^2x_j^2\right)$\amirali{,} which is positive definite, i.e., positive for nonzero $x$. This implies that $g$ is 
coercive \cite[Section 4.2]{jeyakumar2014polynomial} and thus attains its infimum \cite[Appendix A.2]{bertsekas1997nonlinear}. Let $x^*$ be a minimizer of $g$. We provide a lower bound on $g(x^*)$, which is a lower bound on $g(x),$ which in turn is a lower bound on $||\nabla p(x,y,z)||_2^2.$ To do this, we show through a series of claims that $|(x_i^*)^2-1| \leq \epsilon$, for some $\epsilon>0$ and for all $i=1,\ldots,m$. This enables us to leverage Lemmas~\ref{lem:g.lb} and \ref{prop:lb.g.s} to conclude. 

The proof of our claims makes use of the First Order Necessary Condition (FONC) and Second Order Necessary Condition (SONC) for unconstrained minimization \cite[Proposition 1.1.1]{bertsekas1997nonlinear}. We thus compute the first and second-order derivatives of $g$. We have, for $i=1,\ldots,m,$
\begin{align}\label{eq:diff}
\frac{\partial g(x)}{\partial x_i}=C^2 \left( 2\left(\sum_{j=1}^m x_j-2k+m\right)+4N^2x_i(x_i^2-1)+2\sum_{j>i} (1-A_{ij})^2 (x_i+1)(x_j+1)^2\right).
\end{align}
Furthermore, again for $i=1,\ldots,m$, we have
\begin{align}\label{eq:second.order}
\frac{\partial^2 g(x)}{\partial x_i^2}=C^2\left( 2+4N^2(3x_i^2-1)+2\sum_{j> i} (1-A_{ij})^2 (x_j+1)^2\right).
\end{align}

\subsubsection*{Claim 1: $0.3 \leq |x_i^*|\leq 2.1$ for $i=1,\ldots,m$.} 

We first show the upper bound. As $x^*$ is a minimizer of $g$, the FONC applied to $g$ at $x^*$ gives us $\nabla g(x^*)=0$, that is, for any $i=1,\ldots,m,$
$$-4C^2N^2x_i^*((x_i^*)^2-1)=C^2 \left( 2\left(\sum_{j=1}^m x_j^*-2k+m\right)+2\sum_{j> i} (1-A_{ij})^2 (x_i^*+1)(x_j^*+1)^2 \right).$$
Taking absolute values on both sides and upper bounding the right hand side of the equality via the triangle inequality, we obtain:
\begin{equation} \label{eq:FONC}
4C^2 N^2|x_i^*|\cdot|(x_i^*)^2-1|\leq C^2 \left( 2\sum_{j=1}^m |x_j^*|+4k+2m+2\sum_{j> i} (1-A_{ij})^2 (|x_i^*|+1)(|x_j^*|+1)^2\right).
\end{equation}
We can then lower bound the left hand side of the inequality using the reverse triangle \amirali{inequality.}
For any $i=1,\ldots,m$, we obtain
$$4C^2N^2|x_i^*|\cdot(|x_i^*|^2-1)\leq C^2 \left( 2\sum_{j=1}^m |x_j^*|+4k+2m+2\sum_{j> i} (1-A_{ij})^2 (|x_i^*|+1)(|x_j^*|+1)^2\right).$$
Let $i_+\in \arg \max_{i=1,\ldots,m}|x_i^*|.$ Taking $i=i_+$ in the previous inequality and noting that, for any $j=1,\ldots,m,$  $|x_j^*|\leq |x_{i+}^*|$, we get:
$$4C^2N^2|x_{i_+}^*|\cdot(|x_{i_+}^*|^2-1)\leq C^2 \left( 2m|x_{i_{+}}^*|+6m+2m (|x_{i_+}^*|+1)^3\right),$$
where we have also used the fact that $k \leq m$ and $(1-A_{ij})^2\leq 1.$
Expanding and rearranging, we get
$$C^2 \left( (2m-4N^2)|x_{i_{+}}^*|^3+6m|x_{i_{+}}^*|^2+(8m+4N^2)|x_{i_{+}}^*|+8m\right)\geq 0.$$
Recall that $N=80m^{2} \cdot 2(m^2+1)$ and thus $4N^2=25600 m^4  \cdot 4(m^2+1)^2$. For \amirali{any} $m \geq 1,$ it follows that $C^2(2m-4N^2)< 0$, $6mC^2 >0$, $C^2(8m+4N^2)> 0,$ and $8mC^2 >0$, and Lemma~\ref{lem:bounds} applies. One can easily show that, for any $m\geq 1$,
$$\max\left\{\frac{6m}{4N^2-2m}, \sqrt{\frac{8m+4N^2}{4N^2-2m}}, \sqrt[3]{\frac{8m}{4N^2-2m}} \right\}=\sqrt{\frac{8m+4N^2}{4N^2-2m}}.$$
Lemma~\ref{lem:bounds} then implies that
\begin{align*}
|x_{i_+}^*|\leq 2\sqrt{\frac{8m+4N^2}{4N^2-2m}}=2\sqrt{1+\frac{10m}{4N^2-2m}}=2\sqrt{1+\frac{10}{25600m^3\cdot 4(m^2+1)^2-2}}\text{ for }i=1,\ldots,m.
\end{align*}
We have that $25600m^3\cdot 4(m^2+1)^2-2 \geq 25 600$ for any $m \geq 1,$ and thus, 
\begin{align}\label{eq:ub}
|x_i| \leq |x_{i_+}^*|\leq 2\cdot \sqrt{1+\frac{10}{25600}} \leq 2.1 \text{ for } i=1,\ldots,m.
\end{align}

To obtain a lower bound on $|x_i^*|$, we use the SONC applied to $g$ at $x^*$. This gives us \amirali{that $\nabla^2 g(x^*)$ is positive semidefinite and hence has nonnegative diagonals.} 
Recalling \eqref{eq:second.order}, for any $i=1,\ldots,m,$ \amirali{we have}
$$C^2\left( 2+4N^2(3(x_i^*)^2-1)+2\sum_{j> i} (1-A_{ij})^2 (x_j^*+1)^2\right) \geq 0,$$
which \amirali{by the triangle inequality and \eqref{eq:ub}} implies that 
$$C^2\left(2+4N^2(3(x_i^*)^2-1)+2m(2.1+1)^2\right)\geq 0.$$
Rearranging, we get \amirali{that}
$$(x_i^*)^2 \geq \frac13 - \frac{2+2m(2.1+1)^2}{12N^2}\geq \frac{1}{3}- \frac{22m}{3 \cdot 25600m^{4}\cdot 4(m^2+1)^2} \geq \frac{1}{3}- \frac{22}{3 \cdot 25600} \geq 0.3 ,$$
where we have used  the fact that $2+2m(2.1+1)^2 \leq 22m$ for any $m\geq 1$ for the second inequality, and the fact that $m^3\cdot 4(m^2+1)^2\geq 1$ for all $m \geq 1$ for the third inequality. Taking square roots on either side, we end up with the lower bound in Claim~1.

\subsubsection*{Claim 2: Let $\epsilon \mathrel{\mathop{:}}= \frac{0.005}{m^2(m^2+1)^2}$. Then $|(x_i^*)^2-1| \leq \epsilon,$ for $i=1,\ldots,m.$}

We first use Claim~1 to bound $|(x_i^*)^2-1|$ in a more refined \amirali{fashion}. Let $i \in \{1,\ldots,m\}.$ We have the following set of inequalities:
\begin{align*}
&4C^2N^2\cdot 0.3 \cdot |(x_i^*)^2-1| \leq 4C^2N^2\cdot |x_i^*|\cdot |(x_i^*)^2-1|\\
&\leq C^2 \left( 2\sum_{j=1}^m |x_j^*|+4k+2m+2\sum_{j> i} (1-A_{ij})^2 (|x_i^*|+1)(|x_j^*|+1)^2\right)\\
&\leq C^2\left(2m\cdot 2.1+6m +2m(2.1+1)^3\right),
\end{align*}
where the first inequality makes use of Claim~1, the second inequality is exactly \eqref{eq:FONC}, and the third inequality leverages \eqref{eq:ub} and the fact that $k \leq m$ and $1-A_{ij} \leq 1. $
We then obtain Claim~2 by dividing the inequality on both sides by $4C^2N^2\cdot 0.3:$
\begin{equation}\label{eq:sandwich}
|(x_i^*)^2-1| \leq \frac{2m\cdot 2.1+6m+2m(2.1+1)^3}{4 \cdot 0.3N^2}\leq \frac{235 m}{25600 m^4 \cdot 4(m^2+1)^2} \leq \frac{0.005}{m^3(m^2+1)^2}\leq \epsilon.
\end{equation}
\amirali{With Claim~2 established, Lemma~\ref{lem:g.lb} implies that}
 $g(x^*) \geq g(sgn(x^*))-48C^2m^2\epsilon$. Note that $sgn(x^*) \in \{\pm 1\}^m$ but the set $\{i \in \{1,\ldots,m\}~|~sgn(x^*)_i=1\}$ cannot be a clique of size $k$ in G (as there are no cliques of size $k$ in $G$ by assumption). Thus, from Lemma \ref{prop:lb.g.s}, we have:
$$g(sgn(x^*)) \geq \frac{C^2}{(m^2+1)^2}.$$
Combining the two inequalities, we get $g(x^*) \geq \frac{C^2}{(m^2+1)^2}-48C^2m^2 \epsilon.$
Recalling that $$||\nabla p(x,y,z)||_2^2 \geq g(x)$$ and $x^*$ is \amirali{a} minimizer of $g$, we obtain:
$$||\nabla p(x,y,z)||_2^2 \geq g(x) \geq g(x^*) \geq \frac{C^2}{(m^2+1)^2}-48C^2m^2 \epsilon.$$
As $\epsilon=\frac{0.005}{m^2 (m^2+1)^2}$, $C=2^{n+1}(m^2+1)$ and $48\cdot 0.005\leq 1/4$, it follows that
\begin{align*}
||\nabla p(x,y,z)||_2^2&\geq \frac{2^{2n+2}(m^2+1)^2}{(m^2+1)^2}- \frac{2^{2n+2}(m^2+1)^2\cdot 48m^2\cdot 0.005}{m^2(m^2+1)^2}=2^{2n+2}(1-\frac{1}{4})\geq 2^{2n}.
\end{align*}
Taking square roots on both sides then gives us \amirali{the desired inequality in} \eqref{eq:lb.p}.
\end{proof}


\amirali{We remark that if we set $C = 2^{\mathrm{poly}(n)}(m^2+1)$ in the proof instead of $C = 2^{n+1}(m^2+1)$, it follows that, unless P = NP, computing a $2^{\mathrm{poly}(n)}$-approximate critical point is impossible in polynomial time. We have made a more concrete choice for the constant $C$, as we believe that the statement of Theorem~\ref{thm:eps.approx.cubic} is already sufficiently negative. More interestingly, by setting $C = n^d(m^2+1)$ for some $d \in \mathbb{N}$ in the proof of Theorem~\ref{thm:eps.approx.cubic}, we obtain the following corollary, which states that if the aim is to obtain a less crude $n^d$-approximate critical point, then we can even rule out pseudo-polynomial-time algorithms (unless P = NP).
}

\begin{corollary} \label{cor:pseudo.poly}
\aaa{Let $d$ be an arbitrary positive integer.} If there is a \amirali{pseudo-polynomial-time} algorithm $\mathcal{A}$ that takes as input a cubic polynomial $p$ in $n$ variables and is such that $(i)$ when $p$ has no critical points, $\mathcal{A}$ is free to return an abritrary point, $(ii)$ when $p$ has a critical point, $\mathcal{A}$ returns a $n^d$-approximate critical point, then $P=NP.$
\end{corollary}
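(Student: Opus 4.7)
The plan is to replay the proof of Theorem~\ref{thm:eps.approx.cubic} almost verbatim, replacing the constant $C = 2^{n+1}(m^2+1)$ in the definition~\eqref{eq:def.p} of the cubic polynomial $p$ by $C = 2 n^d (m^2+1)$. The same reduction from \textsc{Clique} then produces, in polynomial time in the size of the input graph $G$, an instance $p$ whose coefficients all have magnitude at most $\mathrm{poly}(n)$. The forward direction---``if $G$ has a clique of size $k$, then $p$ admits an exact critical point''---does not involve $C$ and is preserved verbatim, since the witness $(\bar{x},0,0)$ constructed from Lemma~\ref{prop:clique.zeros.g} still satisfies $\nabla p(\bar{x},0,0) = 0$.

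For the converse direction, the essential observation is that the quartic polynomial $g$ from~\eqref{eq:def.g} carries $C^2$ as a common factor in all of its terms, so $C^2$ cancels out of the FONC $\nabla g(x^*) = 0$ and the SONC $\nabla^2 g(x^*) \succeq 0$. Claims~1 and 2 in the proof of Theorem~\ref{thm:eps.approx.cubic} therefore go through unchanged, and in particular any minimizer $x^*$ of $g$ still satisfies $|(x_i^*)^2 - 1| \leq \epsilon$ with $\epsilon = 0.005/(m^2(m^2+1)^2)$. Applying Lemmas~\ref{lem:g.lb} and~\ref{prop:lb.g.s} exactly as in the proof of Theorem~\ref{thm:eps.approx.cubic} yields
\[
\|\nabla p(x,y,z)\|^2 \;\geq\; g(x^*) \;\geq\; \frac{C^2}{(m^2+1)^2} - 48\, C^2 m^2 \epsilon \;\geq\; \frac{3\,C^2}{4(m^2+1)^2} \;=\; 3\,n^{2d} \;>\; n^{2d},
\]
so in the no-clique case $p$ has no $n^d$-approximate critical point.

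The final step converts a hypothetical pseudo-polynomial-time algorithm $\mathcal{A}$ for finding an $n^d$-approximate critical point into a polynomial-time algorithm for \textsc{Clique}. Because the numeric values of the coefficients of $p$ are now polynomial in $n$ (rather than exponential, as in the original proof), $\mathcal{A}$ terminates in time polynomial in $n$, and hence polynomial in the size of $(G,k)$. The dichotomy used at the end of the proof of Theorem~\ref{thm:eps.approx.cubic}---comparing the returned $\|\nabla p\|$ against the threshold $n^d$---then decides \textsc{Clique} in polynomial time, yielding $P=NP$. No step is expected to present a genuine obstacle; the only point requiring care is fixing the multiplicative constant in $C$ large enough (here, $2$) so that the final bound strictly exceeds $n^{2d}$ rather than merely approaching it.
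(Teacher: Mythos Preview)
Your proposal is correct and follows exactly the approach the paper indicates just before the corollary, namely rerunning the proof of Theorem~\ref{thm:eps.approx.cubic} with $C$ chosen to be polynomial in $n$ rather than exponential. Your extra factor of $2$ in $C=2n^d(m^2+1)$ is a sensible refinement: with the paper's literal suggestion $C=n^d(m^2+1)$ the final lower bound would be $\tfrac{3}{4}n^{2d}$, which falls just short of $n^{2d}$, whereas your choice cleanly gives $3n^{2d}>n^{2d}$.
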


\subsubsection*{Discussion around Theorem~\ref{thm:eps.approx.cubic}.} As a concrete example of the implications of Theorem~\ref{thm:eps.approx.cubic}, consider the gradient descent algorithm. \amirali{Theorem~\ref{thm:eps.approx.cubic} establishes that, unless $\mathrm{P}=\mathrm{NP}$, there are $n$-variate cubic polynomials for which gradient descent, using any efficiently computable stepsize, requires an exponential number of iterations to return even a $2^n$-approximate critical point. In fact, as a consequence of more nuanced results in complexity theory, Theorem~\ref{thm:eps.approx.cubic} further implies that such behavior must occur on a ``significant portion'' of cubic $n$-variate polynomials; see \citep[Corollary 2.2]{hemaspaandra2012sigact} for a precise complexity-theoretic statement.}


More generally, unless P=NP, Theorem~\ref{thm:eps.approx.cubic} precludes the existence of an algorithm that finds an $\epsilon$-approximate critical point of a function in $n$ variables in time polynomial in $(n, 2^{\frac{1}{\epsilon}})$ \amirali{(as well as in the natural description size of the function instance).} Indeed, if such an algorithm existed, then \amirali{by setting $\epsilon=2^n$, the resulting runtime would be polynomial in $n$ 
and the description size of the instance}, contradicting Theorem~\ref{thm:eps.approx.cubic}. 



We also \amirali{claim} that computing a $2^n$-approximate critical point remains hard even in the setting where a critical point of $p$ is guaranteed to exist. Indeed, we can show that if there exists a polynomial-time algorithm which takes as input an $n$-variate cubic polynomial with a promise that this polynomial has a critical point and returns a $2^n$-approximate critical point, then \amirali{the \textsc{Planted Clique} conjecture~\citep{jerrum1992large,kuvcera1995expected} would be refuted}. This well-known conjecture involves taking as input two positive integers, $m$ and $k \in \{1,\ldots,m\}$,  and creating an Erd\"os-R\'enyi graph on $m$ vertices by connecting each pair of vertices independently with probability $1/2$. A clique of size $k$ is then planted in this graph by randomly choosing a subset of $k$ \amirali{vertices} from the $m$ original \amirali{ones} and adding all edges between \amirali{them}. The conjecture, which is widely believed to be true, states that, unless P=NP, recovering this clique in polynomial time is impossible whenever $k=o(\sqrt{m})$ (and $k \geq (2+\epsilon)\log(m)$ for any constant $\epsilon>0$) \citep{chen2025almost}. 
We omit the proof of our claim above: it uses the same reduction as the proof of Theorem~\ref{thm:eps.approx.cubic} and then shows that if $(\bar{x},\bar{y},\bar{z})$ is a $2^n$-approximate critical point of the polynomial $p$ in \eqref{eq:def.p} returned by the algorithm, then $sgn(\bar{x}) \in \{\pm 1\}^m$ is an indicator vector of a clique of size $k$ in $G$.


Note that in the planted clique setting, with high probability, the graph has a unique clique of size $k$ \citep{bollobas1976cliques}, which implies that the polynomial $p$ built in the aforementioned reduction has a unique critical point. Thus, the additional promise of uniqueness of the critical point (on top of existence) does not make the task of finding an approximate critical point any easier. In fact, combining our reduction with a \aaa{seminal} result of Valiant and Vazirani \citep{valiant1985np}, we can show that existence of a polynomial-time algorithm for finding a $2^n$-approximate critical point of an $n$-variate cubic polynomial which is promised to have a unique critical point \aaa{would imply} that any problem in NP can be solved in randomized polynomial time \amral{(i.e., NP=RP)}.

%



\aaa{As a final remark on Theorem~\ref{thm:eps.approx.cubic}, we briefly discuss its implications for the design of higher-order Newton methods~\citep{nesterov2021implementable,silina2022unregularized,ahmadi2024higher,cartis2022evaluation}. Recall that in each iteration of the classical Newton method, one computes a critical point of the second-order Taylor expansion of the objective function around the current iterate. This computation is tractable, as it reduces to solving a linear system and can be performed in time polynomial in the dimension. Higher-order Newton methods aim to improve the convergence rate and oracle complexity of Newton’s method by instead working with Taylor expansions of degree three or higher. While recent work has proposed higher-order methods with tractable per-iteration complexity~\citep{nesterov2021implementable,ahmadi2024higher}, these algorithms are more involved than a direct extension of Newton’s method—namely, one that computes (approximately) a critical point of a higher-order Taylor expansion. Theorem~\ref{thm:eps.approx.cubic} shows that, unless P=NP, \amirali{this most natural higher-order extension of Newton’s method lacks tractable per-iteration complexity already when the Taylor expansion has degree three.}}

\subsection{Complexity of computing an $\epsilon$-approximate critical point of a lower bounded function} \label{subsec:eps.approx.quartic}


\aaa{In this section, we investigate whether computing an approximate critical point of a function remains intractable if the function is known to be lower bounded. Let us observe first that it is NP-hard to decide whether a lower bounded function has a critical point.}

\begin{proposition} \label{prop:crit.lb}
If there is a polynomial-time algorithm that can decide whether the exponential of a cubic polynomial (which is always lower bounded) has a critical point, then P=NP.
\end{proposition}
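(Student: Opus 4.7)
The plan is to give a one-line reduction from the problem of deciding whether an $n$-variate cubic polynomial has a critical point, which is shown to be NP-hard in \citep[Theorem~2.1]{ahmadi2022complexity}. The key algebraic observation I would invoke is the chain rule: for any differentiable $p:\mathbb{R}^n\to\mathbb{R}$, the gradient of $f(x):=e^{p(x)}$ factors as
\[
\nabla f(x) = e^{p(x)}\,\nabla p(x).
\]
Since $e^{p(x)}>0$ for every $x\in\mathbb{R}^n$, the equation $\nabla f(x)=0$ is equivalent to $\nabla p(x)=0$. Hence the critical point sets of $f$ and of $p$ coincide, and in particular $f$ has a critical point if and only if $p$ does.

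With this observation in hand, the reduction is immediate. Given a cubic polynomial $p$ (encoded by the rational coefficients of its monomials), I would output the function $e^{p}$, whose natural parametric description is precisely the same list of coefficients, now interpreted as the exponent of an exponential. The input sizes in the Turing model are identical, so the reduction is trivially polynomial in length. A hypothetical polynomial-time algorithm for deciding whether $e^{p}$ has a critical point would then, via the equivalence above, decide in polynomial time whether $p$ has a critical point, contradicting \citep[Theorem~2.1]{ahmadi2022complexity} unless P=NP.

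Finally, I would confirm the parenthetical claim in the statement: $e^{p(x)}\ge 0$ for every $x\in\mathbb{R}^n$, so every function in the class under consideration is indeed lower bounded, and the lower-boundedness assumption is automatic rather than an additional constraint to enforce. Since the entire argument rests on the identity $\nabla e^{p}=e^{p}\,\nabla p$ together with the cited prior hardness result, there is no genuine obstacle; the only thing to be careful about is emphasizing that the reduction is presented in the Turing model and that the input encoding of $e^{p}$ is taken to be the coefficient vector of $p$, matching the convention adopted in the paper for finitely parametrized function classes.
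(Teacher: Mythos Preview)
Your proposal is correct and follows essentially the same approach as the paper: both use the chain rule identity $\nabla e^{p}=e^{p}\nabla p$ to show that $e^{p}$ and $p$ have identical critical point sets, and then invoke the NP-hardness result of \citep[Theorem~2.1]{ahmadi2022complexity} for cubic polynomials. Your version is slightly more explicit about the Turing-model encoding, but the argument is the same.
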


\begin{proof}
A function $f:\mathbb{R}^n\rightarrow\mathbb{R}$ has a critical point if and only if $e^f$, which is lower bounded, has a critical point. Indeed, $\nabla e^{f(x)}=e^{f(x)} \nabla f(x)$. It is known that deciding whether a \aaa{cubic polynomial has a critical point is NP-hard~\citep{ahmadi2022complexity}, and this concludes the proof.}
\end{proof}

\aaa{The next proposition shows that even} under additional assumptions, namely guaranteed existence of a critical point and absence of any spurious critical points\footnote{We say that a function has no spurious critical points if any critical point is a global minimum. Observe that absence of spurious critical points implies absence of spurious local minima.}, the problem of computing \aaa{a critical point} of a lower bounded function remains hard. Indeed, for $x\in \mathbb{R}^n,$ let $\{\tilde{q}_i(x)=0, i=1,\ldots,r\}$ be a set of quadratic equations which is known to have a (rational) solution, but for which finding a solution is intractable. As an example, such a set of quadratic equations can encode \aaa{the} \textsc{Planted Clique} problem (see Section~\ref{subsec:eps.approx.no.lb}) \aaa{or} the problem of finding a Nash equilibrium in a bimatrix game\footnote{A Nash equilibrium can be defined by a set of quadratic inequalities which ensure no unilateral payoff improvement when moving to a pure strategy. One can then turn this into a set of quadratic equations by introducing slack variables.} (see \citep{daskalakis2009complexity,chen2006settling} for \aaa{results on hardness of computing a Nash equilibrium} and the definition of the complexity class PPAD).

\begin{proposition}\label{thm:critical.point.spurious}
If there is a polynomial-time algorithm that takes as input a lower bounded function (the product of an $n$-variate quartic polynomial and an exponential function in one variable), which is guaranteed to have critical points but no spurious critical points, and returns a critical point, then the \textsc{Planted Clique} conjecture \aaa{is} \amirali{refuted} and PPAD=P.
\end{proposition}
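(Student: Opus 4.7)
The plan is to reduce the task of solving a system of quadratic equations $\{\tilde q_i(x)=0\}_{i=1}^{r}$---one that encodes either a \textsc{Planted Clique} instance or the computation of a Nash equilibrium in a bimatrix game---to that of finding a critical point of a function of the prescribed form. A quadratic encoding of Planted Clique is provided by Lemma~\ref{prop:clique.zeros.g}, while an encoding for Nash equilibrium is obtained (as indicated in the paragraph preceding the proposition) by rewriting the best-response inequalities as quadratic equations via slack variables. In both cases, a rational solution is guaranteed to exist, yet finding one is believed to be intractable.

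Given such a system, I would form the nonnegative quartic polynomial
\[
q(x) \mathrel{\mathop{:}}= \sum_{i=1}^{r}\tilde q_i(x)^2,
\]
which vanishes precisely on the solution set of the system, introduce one auxiliary variable $t$, and submit to the hypothesized algorithm the function
\[
f(x,t) \mathrel{\mathop{:}}= q(x)\,e^{t}.
\]
This is manifestly the product of a quartic polynomial in $n+1$ variables (with $t$ not explicitly appearing) and an exponential in a single variable, and is lower bounded by $0$.

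It then remains to verify the two promised properties. On the one hand, at any solution $x^*$ of the system one has $q(x^*)=0$ and $\nabla q(x^*)=2\sum_i\tilde q_i(x^*)\nabla\tilde q_i(x^*)=0$, so $(x^*,t)$ is a critical point of $f$ for every $t\in\mathbb{R}$; the existence of critical points thus follows from the existence of a solution. On the other hand, at any critical point $(x,t)$ of $f$, the condition $\partial_t f(x,t)=q(x)\,e^{t}=0$ together with $e^{t}>0$ forces $q(x)=0$, which in turn forces $\tilde q_i(x)=0$ for every $i$. Consequently $f(x,t)=0=\inf f$, so every critical point is a global minimum and no spurious critical points exist. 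Running the assumed polynomial-time algorithm on $f$ therefore returns a critical point whose $x$-coordinate solves the quadratic system: instantiating the construction with the Planted Clique encoding refutes the Planted Clique conjecture, and instantiating it with the Nash equilibrium encoding establishes PPAD$=$P.

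I do not anticipate a serious obstacle, as the entire argument hinges on the one-line construction $f=q\cdot e^{t}$. The key ingredient is the $t$-derivative $\partial_t f = q\,e^t$, which enforces $q(x)=0$ at critical points and thereby rules out the points where $\nabla q=0$ but $q>0$---a pitfall that a naive $q$-only construction would admit, since $\sum_i\tilde q_i^2$ typically has many critical points that do not correspond to solutions of the system. The only mild subtlety is verifying that the quadratic encodings of Planted Clique and bimatrix Nash equilibrium admit rational solutions of polynomial bit-size, so that the reduction remains within the Turing model of computation; this is standard.
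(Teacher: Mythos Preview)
Your proposal is correct and essentially identical to the paper's proof: the paper constructs $f(x,w)=e^{w}\sum_{i=1}^r \tilde q_i^2(x)$, observes that $\nabla f(x,w)=(2e^w\sum_i\tilde q_i(x)\nabla\tilde q_i(x),\;e^w\sum_i\tilde q_i^2(x))$, and draws the same conclusions about existence of critical points, absence of spurious ones, and recovery of a solution to the quadratic system. Your additional remark explaining why the auxiliary exponential variable is needed (to rule out critical points of $q$ with $q>0$) is a nice clarification that the paper leaves implicit.
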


\begin{proof}
\amirali{As outlined above, let $\{\tilde{q}_i(x)=0, i=1,\ldots,r\}$}
be a set of quadratic equations \amirali{in $n$ variables} whose \amirali{solution(s) correspond} to the planted clique in \amirali{the setting of the \textsc{Planted Clique}  problem,} or to Nash \amirali{equilibria of} a bimatrix game. Define the function \amirali{$f:\mathbb{R}^{n+1}\rightarrow\mathbb{R}$ as}$$f(x,w)=e^{w} \cdot \left(\sum_{i=1}^r \tilde{q}_i^2(x)\right).$$ Clearly, $f$ is lower bounded  by zero. \amirali{Furthermore,} we have $$\nabla f(x,w)=\begin{pmatrix} 2e^w \cdot \sum_{i=1}^r \tilde{q}_i(x) \nabla q_i(x)\\  e^w (\sum_{i=1}^r \tilde{q}^2_i(x)) \end{pmatrix}.$$ Thus, any critical point $(\bar{x},\bar{w})$ of $f$ must be such that $\tilde{q}_i(\bar{x})=0$ for $i=1,\ldots,r$. 
\amirali{This implies that at any critical point $(\bar{x},\bar{w})$, we have $f(\bar{x},\bar{w})=0,$ and thus $f$ has no spurious critical points. It} also implies that any polynomial-time algorithm returning a critical point $(\bar{x},\bar{w})$ of $f$ would \amirali{provide us, through the vector $\bar{x}$, the planted clique of interest or a Nash equilibrium of the game in polynomial time. This establishes the two claims.}
\end{proof}
We remark that \aaa{Proposition~\ref{thm:critical.point.spurious}} goes against the somewhat commonly-held belief that minimizing a function with no spurious local minima is tractable. We now study the complexity of finding an \emph{approximate} critical point for a lower bounded function. It is known that a lower bounded function always has an $\epsilon$-approximate critical point for any $\epsilon>0$ \citep{ekeland1974variational}. The question\amirali{, then,} is whether finding such a point is tractable. We \amirali{now show} 
that finding an approximate critical point within a reasonably-sized set is intractable.

\begin{theorem} \label{thm:eps.approx.quartic}
If there is an algorithm $\mathcal{A}$ that takes as input a lower bounded quartic polynomial $p$ in $n$ variables, runs in polynomial time, and is such that
\begin{enumerate}[(i)]
\item when $p$ does not have a critical point in the hypercube $[-1,1]^n$, then $\mathcal{A}$ is free to return an arbitrary point,
\item when $p$ has a critical point in the \gh{hypercube $[-1,1]^n$}, then $\mathcal{A}$ returns a $2^n$-approximate critical point in the ball of radius $2^n$ centered at the origin,
\end{enumerate}
then P=NP.
\end{theorem}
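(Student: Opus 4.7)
The plan is to reduce \textsc{Clique} to this problem by recycling the cubic polynomial $p_c$ from the proof of Theorem~\ref{thm:eps.approx.cubic}. Given an instance $(G, k)$ of \textsc{Clique} with $G$ on $m$ vertices, let $p_c(x, y, z)$ be as in \eqref{eq:def.p}, on $n = m + (m+1) + m(m-1)/2$ variables. I will build the quartic polynomial
\[
\tilde p(x, y, z) \;=\; p_c(x, y, z) \;+\; M_1\,\|(y, z)\|^4 \;+\; M_2\,(\|x\|^2 - m)^2,
\]
where $M_1, M_2 > 0$ are small rationals of bit-size $O(n)$; for concreteness, set $M_1 = M_2 = 2^{-2n - 5}$. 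The two added quartic terms are designed so that (a) they render $\tilde p$ coercive (and hence lower bounded), (b) they vanish together with their gradients at the YES-case critical point, and (c) their total gradient contribution on the ball of radius $2^n$ is smaller than the slack provided by the lower bound on $\|\nabla p_c\|$ proved in Theorem~\ref{thm:eps.approx.cubic}.

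For the YES direction, if $x^* \in \{\pm 1\}^m$ is the indicator of a $k$-clique, then the proof of Theorem~\ref{thm:eps.approx.cubic} shows $\nabla p_c(x^*, 0, 0) = 0$. Moreover, $\nabla(M_1 \|(y, z)\|^4) = 4 M_1 \|(y, z)\|^2 (y, z)$ vanishes at $(y, z) = 0$, and $\nabla(M_2(\|x\|^2 - m)^2) = 4 M_2(\|x\|^2 - m) x$ vanishes at $x^*$ since $\|x^*\|^2 = m$. Hence $(x^*, 0, 0) \in [-1, 1]^n$ is an exact critical point of $\tilde p$. Lower-boundedness follows from coercivity: writing $(x, y, z) = R v$ with $\|v\| = 1$, the degree-$4$ part equals $R^4(M_1 \|v^{(y,z)}\|^4 + M_2 \|v^{(x)}\|^4)$, which is strictly positive on the unit sphere since $v^{(x)}$ and $v^{(y,z)}$ cannot both vanish, and hence dominates the cubic $p_c \sim R^3$ as $R \to \infty$.

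For the NO direction, I will use the triangle inequality together with the gradient lower bound established in the proof of Theorem~\ref{thm:eps.approx.cubic}, which in the no-clique case gives $\|\nabla p_c(x, y, z)\|^2 \geq g(x^*) \geq 3 \cdot 2^{2n}$ for every $(x, y, z)$, i.e.,~$\|\nabla p_c\| \geq \sqrt{3}\, 2^n$. On the ball of radius $2^n$, the added gradient is bounded by
\[
4 M_1 \|(y, z)\|^3 + 4 M_2 |\|x\|^2 - m|\cdot\|x\| \;\leq\; (4 M_1 + 8 M_2)\, 2^{3n},
\]
using $\|x\|,\|(y,z)\|\leq 2^n$ and $m \leq n \leq 2^n$. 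With my choice of $M_1, M_2$, this bound equals $\tfrac{3}{8}\cdot 2^n$, so $\|\nabla \tilde p\| \geq (\sqrt{3} - \tfrac{3}{8})\, 2^n > 2^n$ throughout the ball, which means that $\tilde p$ has no $2^n$-approximate critical point in the ball of radius $2^n$ (and in particular no critical point in $[-1,1]^n$).

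The main obstacle will be the calibration of $M_1, M_2$: they must be positive to guarantee coercivity, yet sufficiently small that the gradient perturbation they create—evaluated on the exponentially large ball of radius $2^n$—is dominated by the $(\sqrt{3} - 1)\, 2^n$ slack in Theorem~\ref{thm:eps.approx.cubic}'s bound. A choice of order $2^{-2n}$ comfortably satisfies both requirements, and $\tilde p$ can be written down in polynomial time. Given a hypothetical algorithm $\mathcal A$ as in the theorem, one decides \textsc{Clique} by running $\mathcal A$ on $\tilde p$ and checking whether the returned point $\bar v$ satisfies both $\|\bar v\| \leq 2^n$ and $\|\nabla \tilde p(\bar v)\| \leq 2^n$; by the analysis above, this check succeeds iff $G$ contains a $k$-clique, whence P $=$ NP.
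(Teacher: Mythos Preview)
Your argument is correct but follows a genuinely different construction from the paper. The paper does \emph{not} perturb the cubic $p_c$; instead it introduces a fresh vector of auxiliary variables $w\in\mathbb{R}^{m+1+m(m-1)/2}$ and takes the manifestly nonnegative quartic
\[
p(x,w)=\tfrac12\,\|w+Cq(x)\|^2,
\]
where $q(x)$ is the vector of quadratic constraints from \eqref{eq:quad.eq}. Lower-boundedness is then trivial, and the NO-case bound comes from $\|\nabla p(x,w)\|\geq\|w+Cq(x)\|\geq C\|q(x)\|-\|w\|$, with $C^2\|q(x)\|^2=g(x)$ bounded below via Lemmas~\ref{lem:g.lb} and~\ref{prop:lb.g.s} and $\|w\|\leq 2^n$ on the ball.

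Your route---add tiny coercive quartic terms $M_1\|(y,z)\|^4+M_2(\|x\|^2-m)^2$ to $p_c$ and control the perturbation by a triangle-inequality argument against the $(\sqrt{3}-1)\,2^n$ slack from Theorem~\ref{thm:eps.approx.cubic}---is a legitimate alternative. Its appeal is that it recycles $p_c$ wholesale and makes the link to Theorem~\ref{thm:eps.approx.cubic} very transparent; the price is a somewhat delicate calibration (the exponentially small $M_i$ must simultaneously make $\tilde p$ coercive and keep the gradient perturbation below $(\sqrt{3}-1)\,2^n$ on an exponentially large ball), whereas the paper's squared-residual construction avoids any such balancing and gets nonnegativity for free.
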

\gh{
Note that the guarantees required of $\mathcal{A}$ are very weak: except in the case where a critical point exists in the hypercube $[-1,1]^n$, the algorithm is permitted to behave arbitrarily, and even in that case, it is only required to return a very coarse \amirali{approximate critical} point in a very loose ball. 
\aaa{Our sense is that, in most applications, one cares about finding $\epsilon$-approximate critical points which are not of unreasonably large size. Such settings are already covered by Theorem~\ref{thm:eps.approx.quartic}. We leave open the complexity of finding approximate critical points of lower bounded functions if one is content with the norm of the algorithm output being exponentially large in \amirali{the} dimension.}


}

\begin{proof}[Proof of Theorem \ref{thm:eps.approx.quartic}.]
	 We prove that such an algorithm would solve \textsc{Clique} in polynomial time. Let $G$ be a graph \amirali{on} $m$ \amirali{vertices} with adjacency matrix $A$ and let $k \in \{1,\ldots,m\}.$ Let $n=m+(m+1)+\frac{m(m-1)}{2},$ $N=80m^2\cdot 2(m^2+1)$\amirali{,} and set $C=2^{n+2}(m^2+1)$. Define $q(x)$ to be the vector of size $m+1+\frac{m(m-1)}{2}$ with entries $q_i(x)$ given \amirali{by}
	 \begin{small}
\begin{align*}
&q_1(x)=\sum_{i=1}^m x_i -2k+m, \quad q_2(x)=N(x_1^2-1),\quad \ldots, \quad q_{m+1}(x)=N(x_m^2-1),\\
&q_{m+2}(x)=(1-A_{12})(x_1+1)(x_2+1), \quad \ldots,\quad q_{m+1+m(m-1)/2}=(1-A_{(m-1)m})(x_{m-1}+1)(x_m+1).
\end{align*}
\end{small}
Letting $w\mathrel{\mathop{:}}=(w_1,\ldots,w_{m+1+m(m-1)/2}),$ we then define $p$ to be the $n$-variate quartic \amirali{polynomial}
$$p(x,w)=\frac12 ||w+Cq(x)||^2.$$
Obviously, $p(x,w)$ is lower bounded by zero. Furthermore, the reduction from $(G,k)$ to $p$ is polynomial in length as the number of bits required to write down the coefficients of $p$ is polynomial in the size of \amirali{$(G,k)$}.

Let $B$ be the ball of radius $2^n$ centered at the origin and let \gh{$\mathcal{H}=[-1,1]^n$}. We prove that $(i')$ if there is a clique of size $k$ in $G$, then $p$ has a critical point in $\mathcal{H}$, and $(ii')$ if there is no clique of size $k$ in $G$, then $p$ does not even have a $2^n$-approximate critical point within $B$. Once we have shown claims $(i')$ and $(ii')$, the theorem follows. To see this, let $(\bar{x},\bar{w})$ be the point returned by the algorithm $\mathcal{A}.$ If $||(\bar{x},\bar{w})|| \leq 2^n$ and $||\nabla p(\bar{x},\bar{w})|| \leq 2^n,$ then $G$ has a clique of size $k$ per claim $(ii')$. If $||(\bar{x},\bar{w})|| >2^n$ or $||\nabla p(\bar{x},\bar{w}))||>2^n$, then $p$ does not have a critical point in $\mathcal{H}$ (otherwise the algorithm would have returned a $2^n$-approximate critical point in $\mathcal{B}$) and there is no clique of size $k$ in $G$, per claim $(i')$. Thus, if an algorithm such as $\mathcal{A}$ existed, we would be able to solve \textsc{Clique} in polynomial time, which would imply P=NP.

We show claim $(i')$ first; i.e., if there is a clique of size $k$ in $G$, then $p$ has a critical point within $\mathcal{H}$. To see this, we use Lemma \ref{prop:clique.zeros.g} once again to \amirali{conclude} that \eqref{eq:quad.eq} is feasible. Noting that 
\begin{align} \label{eqref:eq.deriv.p}
\nabla p(x,w)=\begin{pmatrix} CJ_q(x)^T(w+C q(x))\\
w+Cq(x)
\end{pmatrix},
\end{align}
where $J_q(x)$ is the Jacobian of $q$, and letting $\bar{w}=0$ and $\bar{x}$ a solution to \eqref{eq:quad.eq}, we have that 
$\nabla p(\bar{x},\bar{w})=0$. Furthermore, $\bar{x} \in \{\pm 1\}^m$ and $\bar{w}=0$, which implies that $(\bar{x},\bar{w}) \in \mathcal{H}.$

We now show claim $(ii').$ We prove that if there is no clique of size $k$ in $G$, then $$||\nabla p(x,w)||>2^n, \forall (x,w)\in B.$$ Let $(x,w)\in B$. Note that $||\nabla p(x,w)||^2 \geq ||w+Cq(x)||^2$. Using the reverse triangle inequality, we obtain
\begin{align}\label{eq:lb.grad.p}
||\nabla p(x,w)||\geq ||w+Cq(x)|| \geq |C  ||q(x)||-||w|| ~|.
\end{align}
Note that $C^2||q(x)||^2=g(x)$ as defined in \eqref{eq:def.g}. As in the proof of Theorem~\ref{thm:eps.approx.cubic}, as Claim~1 and Claim~2 in that proof hold for a minimizer $x^*$ of $g$ and for $\epsilon=\frac{0.005}{m^2(m^2+1)^2}$, we can utilize Lemma \ref{lem:g.lb} and conclude that
$g(x^*) \geq g(sgn(x^*))-48C^2m^2\epsilon.$
As there is no clique of size $k$ in $G$ by assumption, we can apply Lemma \ref{prop:lb.g.s} once again to obtain
$g(sgn(x^*)) \geq \frac{C^2}{(m^2+1)^2}.$
Combining the two inequalities, we \amirali{get}
$$g(x)\geq g(x^*) \geq \frac{C^2}{(m^2+1)^2}-48C^2m^2 \epsilon.$$
Replacing $\epsilon$ by its value and taking square roots, it follows that
$$C||q(x)||=\sqrt{g(x)} \geq \frac{\sqrt{3}C}{2(m^2+1)}.$$
Plugging this into \eqref{eq:lb.grad.p} and noting that as $(x,w)\in B$, we have $||w||\leq 2^n$, we \amirali{get}
$$||\nabla p(x,w)|| \geq \frac{\sqrt{3}C}{2(m^2+1)}-2^n. $$ 
\amirali{Recalling the choice of $C$,} we see that $||\nabla p(x,w)||>2^n$. This concludes the proof.
\end{proof}

\section{Complexity of computing an $\epsilon$-near critical point} \label{sec:eps.near}

In this section, we focus on the notion of \emph{$\epsilon$-near critical points}, which we define below.

\begin{definition}[$\epsilon$-near critical point]
Let $f:\mathbb{R}^n\rightarrow \mathbb{R}$ be a differentiable function and $\epsilon>0$. An \emph{$\epsilon$-near critical point} of $f$ is a point $\bar{x} \in \mathbb{R}^n$ such that $||\bar{x}-x^*||\leq \epsilon$ for some critical point $x^*$ of $f$.
\end{definition}

The notion of a near critical point is a very natural alternative to that of an approximate critical point. 
Indeed, a critical point $x^*$ is an optimal solution to the optimization problem $\min_x ||\nabla f(x)||.$ An approximate critical point thus is a point that approximates the \emph{optimal value} of this problem, whereas a near critical point is an approximation of the \emph{optimal solution} of this problem\ghall{; see Figure \ref{fig:near.vs.approx} for an illustration of the the \amirali{distinction} between both notions in a one-dimensional setting.}
The notion of near critical points is perhaps not as widely used in the literature as that of approximate critical points. We suspect that this is because recognizing whether a \amirali{given point} is an $\epsilon$-near critical point of a function for a given $\epsilon>0$ is NP-hard\footnote{\aaa{The same statement is true for the problem of recognizing whether a point $\bar{x}$ is a local minimum~\citep{murty1985some}.}}, as the following simple reduction from \textsc{Clique} demonstrates. Let $G$ be a graph on $m$ \amirali{vertices} and $k \in \{1,\ldots,m\}$. Let $p$ be the $n$-variate cubic \amirali{polynomial} defined in \eqref{eq:def.p}, \amirali{$u=0$ (the origin in $\mathbb{R}^n$)}, and \amirali{$\epsilon=\lceil\sqrt{m}\rceil$.} We have seen in the proof of Theorem~\ref{thm:eps.approx.cubic} that if $G$ has a clique of size $k$, then there is a critical point of $p$ with $x^*\in \{\pm 1\}^m, y^*=0, z^*=0$ (for which $x^*$ is the indicator vector of this clique). Thus, \amirali{$u$} is an $\sqrt{m}$-near critical point of $p$. If $G$ does not have a clique of size $k$, then $p$ has no critical points and so $p$ has no $\epsilon'$-near critical point for any $\epsilon'>0$ (in particular \amirali{for $\epsilon'=\lceil\sqrt{m}\rceil$).}  

\amirali{In the next subsections, we study the complexity of computing near critical points.}


\begin{figure}[H]
    \centering
    \includegraphics[scale=0.5]{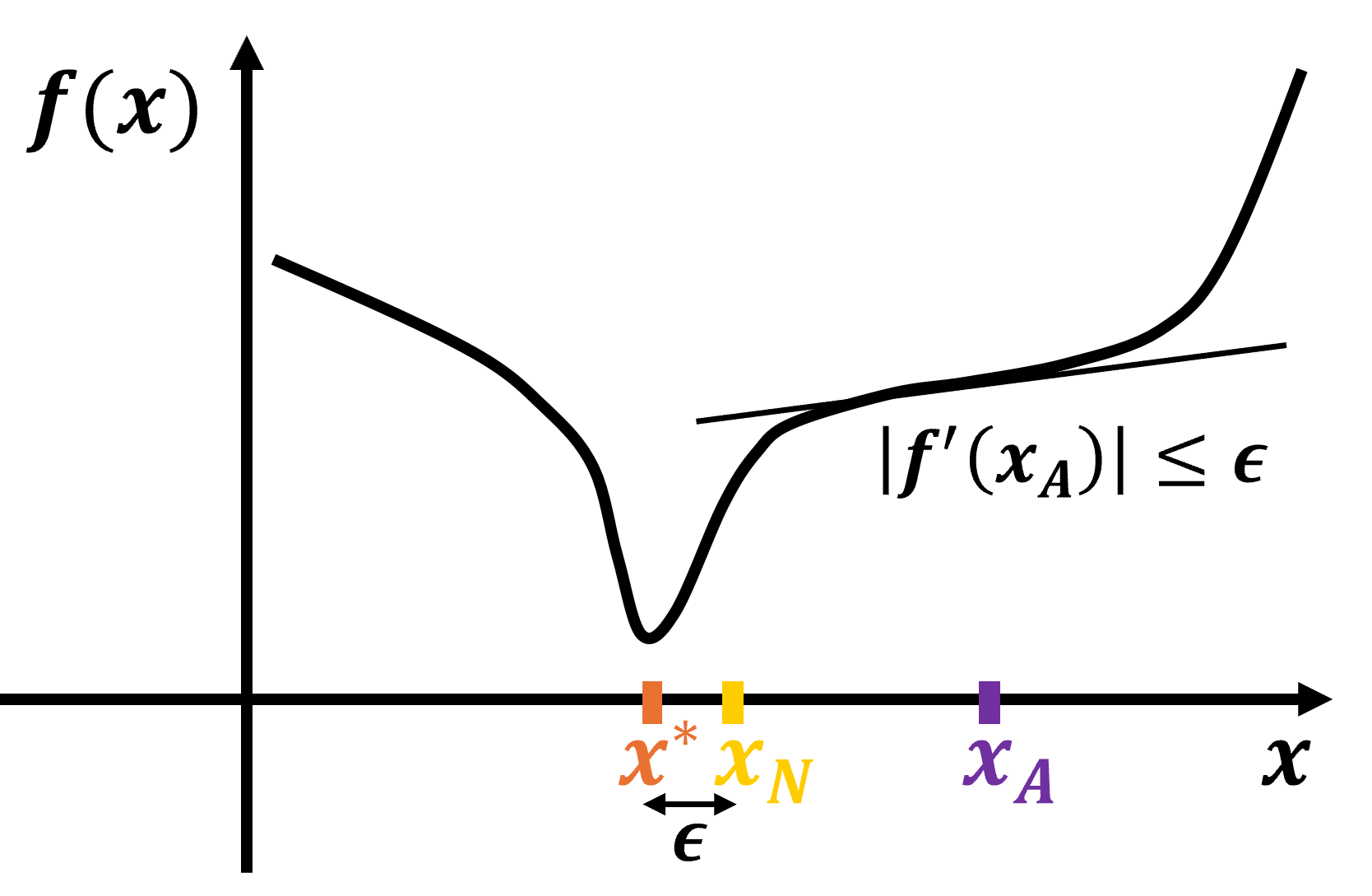}
    \caption{\amirali{The difference between near critical and approximate critical points demonstrated in a one-dimensional example: $x^*$ is a critical point of $f$, and for a given $\epsilon>0$, $x_N$ is an $\epsilon$-near critical point while $x_A$ is an $\epsilon$-approximate critical point.}}
    \label{fig:near.vs.approx}
\end{figure}

\subsection{Complexity of computing an $\epsilon$-near critical point of a general function}

Our first result is the analog of Theorem~\ref{thm:eps.approx.cubic} for near critical points.

\begin{theorem} \label{thm:lower.bound.approx.cubic}
If there is an algorithm $\mathcal{A}$ that takes as input a cubic polynomial $p$ in $n$ variables, runs in polynomial time, and is such that
\begin{enumerate}[(i)]
    \item when $p$ has no critical point, $\mathcal{A}$ is free to return an arbitrary point,
    \item when $p$ has a critical point, $\mathcal{A}$ returns a $\frac{1}{12000 \cdot n^{3.5}}$-near critical point,
\end{enumerate}
then $P=NP.$
\end{theorem}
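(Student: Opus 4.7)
The plan is to give a polynomial-time reduction from \textsc{Clique}, using essentially the same cubic polynomial $p(x,y,z)$ defined in \eqref{eq:def.p} as in the proof of Theorem~\ref{thm:eps.approx.cubic}. Here the large gradient-separating constant $C$ is no longer needed; all that matters is that the set of critical points of $p$ encodes cliques of size $k$ in $G$.

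First I would characterize the critical points of $p$ exactly. Setting $\nabla_y p = 0$ (see~\eqref{eq:grad.p}) forces $\sum_i x_i = 2k - m$ and $x_i^2 = 1$ for each $i$, so $x \in \{\pm 1\}^m$ with exactly $k$ entries equal to $+1$. Setting $\nabla_z p = 0$ forces the support of $x$ to form a clique in $G$; these are precisely the equations in the system of Lemma~\ref{prop:clique.zeros.g}. Thus $p$ has a critical point if and only if $G$ has a $k$-clique, and every critical point $(x^*,y^*,z^*)$ of $p$ has $x^*\in\{\pm 1\}^m$ equal to the indicator vector of some such clique. The remaining equations $\nabla_x p = 0$ form an underdetermined linear system in $(y,z)$ that is satisfiable once $x^*$ is fixed (e.g., by $y^*=0,z^*=0$).

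The second step is a sign-rounding argument. Set $\epsilon = \frac{1}{12000 \cdot n^{3.5}}$ and let $\bar{u}=(\bar{x},\bar{y},\bar{z})\in\mathbb{R}^n$ be the point returned by $\mathcal{A}$ on input $p$. If $G$ has a $k$-clique, then by the guarantee on $\mathcal{A}$ the point $\bar{u}$ lies within distance $\epsilon$ of some critical point $(x^*,y^*,z^*)$, and therefore $|\bar{x}_i - x^*_i|\le \|\bar{u}-(x^*,y^*,z^*)\|\le \epsilon < 1$ for each $i$; since $x^*_i\in\{\pm 1\}$, this forces $sgn(\bar{x})=x^*$. The overall \textsc{Clique} algorithm is then: run $\mathcal{A}$ on $p$, compute $sgn(\bar{x})$, and verify in polynomial time whether its support set is a $k$-clique of $G$. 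In the YES case the verification succeeds by the above. In the NO case, $p$ has no critical point at all (so $\mathcal{A}$ is free to output anything), but the verification step nonetheless rejects, since no $k$-clique exists in $G$.

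The main non-routine step is the first one---the structural characterization of all critical points of $p$---but this follows more or less directly from Lemma~\ref{prop:clique.zeros.g} together with the gradient formula~\eqref{eq:grad.p}. The specific threshold $\frac{1}{12000\cdot n^{3.5}}$ plays no role in the sign-rounding itself (any $\epsilon<1$ would suffice); its significance is that it is inverse polynomial in $n$, which, combined with the polynomial-time assumption on $\mathcal{A}$, is precisely what is needed to rule out pseudo-polynomial-time algorithms in $(n,1/\epsilon)$, consistent with the discussion of the theorem in the introduction.
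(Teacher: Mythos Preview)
Your argument is correct and in fact cleaner than the paper's. Both proofs reduce from \textsc{Clique} via the same cubic $p$ of~\eqref{eq:def.p} (with $C=1$) and both hinge on the observation---immediate from~\eqref{eq:grad.p}---that any critical point $(x^*,y^*,z^*)$ of $p$ has $x^*\in\{\pm 1\}^m$ satisfying the system~\eqref{eq:quad.eq}, hence encoding a $k$-clique. Where the approaches diverge is in how they conclude that $sgn(\bar{x})$ is a clique indicator. The paper proceeds indirectly: it invokes Lemma~\ref{lem:g.lb} and the Lipschitz bound of Lemma~\ref{lem:Lipschitz} to show $g(sgn(\bar{x}))<\frac{1}{(m+1)^4}$, and then contrasts this with the lower bound $g(z)\ge\frac{1}{(m+1)^4}$ from Lemma~\ref{prop:lb.g.s} for any non-clique sign vector $z$. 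You instead observe directly that $|\bar{x}_i-x^*_i|<1$ with $x^*_i\in\{\pm 1\}$ forces $sgn(\bar{x}_i)=x^*_i$, so $sgn(\bar{x})=x^*$ is itself the clique indicator.

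Your route bypasses Lemmas~\ref{lem:g.lb}, \ref{prop:lb.g.s}, and~\ref{lem:Lipschitz} entirely and, as you note, shows that the particular constant $\frac{1}{12000\cdot n^{3.5}}$ is immaterial: any fixed $\epsilon<1$ would do, so the hardness statement is in fact stronger than written. The paper's more elaborate bookkeeping does not appear to buy anything for this theorem specifically; it is likely an artifact of reusing the analytic machinery developed for Theorem~\ref{thm:eps.approx.cubic} (and later for Theorem~\ref{thm:lower.bound.approx}), where controlling $g$ quantitatively is genuinely needed.
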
 
To prove this theorem, we first show a technical lemma. Recall that a function $f$ is $L$-Lipschitz over a box $B \subset \mathbb{R}^n$ if $|f(x)-f(y)| \leq L||x-y||,\forall x,y\in B.$

\begin{lemma} \label{lem:Lipschitz} Let $G$ be a graph \amirali{on} $m$ \amirali{vertices}, $k \in \{1,\ldots,m\}$, $C \in \mathbb{R},$ and $N=160m^2(m^2+1).$ Recall the definition of \amirali{the polynomial} $g=g_{G,C,k}$ in \eqref{eq:def.g}.
	For $\epsilon \in (0,1)$, $g$ is $L$-Lipschitz over $[-1-\epsilon,1+\epsilon]^{m}$ 
    with $L=64C^2m^2+24C^2mN^2 \epsilon.$
\end{lemma}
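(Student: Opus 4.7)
The natural approach is to use the standard calculus fact that a $C^{1}$ function $g$ is $L$-Lipschitz on any convex set $S$ as soon as $\|\nabla g(x)\|\le L$ for all $x\in S$; this follows from the fundamental theorem of calculus applied along straight line segments in $S$ together with the Cauchy--Schwarz inequality. Since $[-1-\epsilon,1+\epsilon]^m$ is convex, the task reduces to producing a uniform bound $\|\nabla g(x)\|\le L$ on this box. This is the approach I would take.

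I would start from the expression for the partial derivatives already derived in equation \eqref{eq:diff} and bound each of the three summands on the right-hand side separately, using the simple coordinate estimates $|x_i|\le 1+\epsilon\le 2$, $|x_i+1|\le 3$, and $k\le m$ which hold on the box. The first (affine) summand contributes at most $O(C^2 m)$ per coordinate by the triangle inequality, and the third (cubic in $x$) summand contributes at most $O(C^2 m)$ per coordinate using $(1-A_{ij})^2\le 1$ together with the coordinate bounds. The crucial refinement is for the second summand, $4C^2 N^2 |x_i(x_i^2-1)|$: factoring $x_i^2-1=(x_i-1)(x_i+1)$ and exploiting that, on $[-1-\epsilon,1+\epsilon]$, $x_i$ is within distance $\epsilon$ of one of the roots $\pm 1$ of the polynomial $x^2-1$, one obtains the estimate $|x_i(x_i^2-1)|\le 6\epsilon$ that kills the factor $N^2$ where it would otherwise dominate. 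This yields a per-coordinate contribution of at most $24\,C^2 N^2\epsilon$ from the second summand.

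Collecting the per-coordinate bounds and passing from the $\infty$-norm estimate to the $2$-norm via the factor $\sqrt{m}$ would then produce an estimate of the claimed form $L=64\,C^2 m^2+24\,C^2 m N^2\epsilon$. The main obstacle is the refined control of the cubic factor $x_i(x_i^2-1)$: a naive estimate would leave a standalone $O(C^2 N^2)$ contribution in $L$, so the key is to leverage the fact that the box hugs the roots of $x_i^2-1$, making this factor of order $\epsilon$ where it matters. The remainder of the argument is routine constant tracking to ensure that the various numerical prefactors aggregate into exactly the two coefficients $64$ and $24$ stated in the lemma.
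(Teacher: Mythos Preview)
Your approach is correct and essentially the same as the paper's: bound each $|\partial g/\partial x_i|$ on the box using \eqref{eq:diff}, with the key observation that $|x_i(x_i^2-1)|\le (1+\epsilon)\cdot 3\epsilon\le 6\epsilon$ on $[-1-\epsilon,1+\epsilon]$, and then pass to a bound on $\|\nabla g\|$. The only discrepancy is your final aggregation step: the paper uses $\|\nabla g\|_2\le \|\nabla g\|_1$ (a factor of $m$, not $\sqrt{m}$), which is precisely how the per-coordinate bound $C^2(64m+24N^2\epsilon)$ turns into $L=64C^2m^2+24C^2mN^2\epsilon$. Your $\sqrt{m}$ route via $\|\cdot\|_2\le \sqrt{m}\,\|\cdot\|_\infty$ would yield the strictly smaller constant $\sqrt{m}\,C^2(64m+24N^2\epsilon)$, which of course still implies the stated $L$, but the constants will not ``aggregate into exactly'' $64m^2$ and $24m$ as you anticipate.
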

\begin{proof}
Let $\epsilon \in (0,1)$ and $x\in [-1-\epsilon,1+\epsilon]^m$. It \amirali{suffices} to show that $||\nabla g(x)|| \leq L.$
We \amirali{have}
\begin{align*}
||\nabla g(x)|| \leq ||\nabla g(x)||_1 =\sum_{i=1}^m \left|\frac{\partial g(x)}{\partial x_i}\right|.
\end{align*}
Fix $i\in \{1,\ldots,m\}$. From \eqref{eq:diff}, we get
\begin{align*}
\left|\frac{\partial g(x)}{\partial x_i}\right| &= \left|C^2 \left( 2\left(\sum_{j=1}^m x_j-2k+m\right)+4N^2x_i(x_i^2-1)+2\sum_{j>i} (1-A_{ij})^2 (x_i+1)(x_j+1)^2\right)\right| \\
&\leq C^2 \left( 2\left(\sum_{j=1}^m |x_j|+3m\right)+4N^2|x_i|\cdot |x_i^2-1|+2\sum_{j>i}  (|x_i|+1) \cdot (|x_j|+1)^2\right) \\
&\leq C^2 \left(2m(1+\epsilon)+6m+4N^2(1+\epsilon)(3 \epsilon)+2m(2+\epsilon)^3\right)\leq C^2 (64m+4N^2\cdot 6\epsilon)\amirali{,}
\end{align*}
where we have used the triangle inequality \amirali{and} the \amirali{facts} that $k \leq m$ and $(1-A_{ij}) \leq 1$ in the first inequality, the fact that $|x_j|\leq 1+\epsilon$ and \amirali{thus} $$|x_j^2-1|=|~|x_j|^2-1|=|~|x_j|-1|\cdot|~|x_j|+1| \leq \epsilon(2+\epsilon)\leq 3\epsilon$$ in the second inequality, and the fact that $\epsilon<1$ in the third. Summing both sides of this inequality over $i$, we get
\begin{align*}
||\nabla g(x)|| \leq \sum_{i=1}^m \left|\frac{\partial g(x)}{\partial x_i}\right| \leq mC^2 (64m+4N^2\cdot 6\epsilon)=L.
\end{align*}
\end{proof}

\begin{proof}[Proof of Theorem~\ref{thm:lower.bound.approx.cubic}.] We prove that such an algorithm would solve \textsc{Clique} in polynomial time. Let $G$ be a graph \amirali{on} $m$ \amirali{vertices} and with adjacency matrix $A$ and let $k \in \{1,\ldots,m\}$. We let $n\mathrel{\mathop{:}}=m+(m+1)+\frac{m(m-1)}{2}$, $N=160m^2(m^2+1)$\amirali{, and $C=1$}. Define $p=p(x,y,z)$ to be the $n$-variate cubic polynomial in \eqref{eq:def.p}. As mentioned previously, the reduction from $(G,k)$ to $p$ is polynomial in length. 

Let $\alpha=\frac{1}{12000 \cdot n^{3.5}}$. We show that if $(x,y,z)$ is an $\alpha$-near critical point of $p,$ then $sgn(x)$ is the indicator vector of a clique of size $k$. Once we have shown this claim, the theorem follows. Indeed, suppose that $(\bar{x},\bar{y},\bar{z})$ were the point returned by the algorithm $\mathcal{A}$ in polynomial time. We can simply check whether $sgn(\bar{x})$ is the indicator function of a clique of size $k$ in $G$. If it is, $G$ obviously has a clique of size $k$. If it is not, then it must be the case that $(\bar{x},\bar{y},\bar{z})$ is not an $\alpha$-near critical point per the claim. Thus, $p$ has no critical point (otherwise the algorithm would have returned an $\alpha$-near critical point). It follows, as shown in the proof of Theorem~\ref{thm:eps.approx.cubic}, that there is no clique of size $k$ in $G$. Thus, if there existed an algorithm such as $\mathcal{A}$, we would be able to solve \textsc{Clique} in polynomial time, implying \amirali{P=NP}.

We now show the claim. Let $(x,y,z)$ be an $\alpha$-near critical point, that is, $$\left| \left| \begin{pmatrix} x\\ y\\ z\end{pmatrix}-\begin{pmatrix} x^*\\ y^*\\ z^*\end{pmatrix}\right| \right| \leq \alpha =\frac{1}{12000 \cdot n^{3.5}},$$ for some critical point $(x^*,y^*,z^*)$ of $p$. (Note that if $(x,y,z)$ is $\alpha$-near critical for $p$, it must be the case that a critical point of $p$ exists.) We show that $s\mathrel{\mathop{:}}=sgn(x)$ is the indicator vector of a clique of size $k$ in $G$. Let $g(x)$ be as defined in \eqref{eq:def.g}. To prove the claim, we show first that $g(s)<\frac{1}{(m+1)^4}$ and then that $g(z)\geq \frac{1}{(m+1)^4}$ for any $z \in \{-1,1\}^m$ that is not an indicator \amirali{vector} of a clique of size $k$ in $G$.

As \amirali{$(x,y,z)$} is an $\alpha$-near critical point of $p$, \amirali{we have} $||x-x^*|| \leq \alpha$, \amirali{and therefore} $$|x_i-x^*_i| \leq \alpha, \forall i=1,\ldots,m.$$ Furthermore, $x^* \in \{\pm 1\}^m$ as $(x^*,y^*,z^*)$ is a critical point of $p$ (see \eqref{eq:grad.p}). Thus,
\begin{align*}
|x_i^2-1|&=|x_i^2-(x_i^*)^2|=|x_i-x_i^*| \cdot |x_i+x_i^*|=|x_i-x_i^*| \cdot |x_i-x_i^*+2x_i^*|\\
&\leq |x_i-x_i^*| \cdot (|x_i-x_i^*|+2|x_i^*|) =|x_i-x_i^*|^2+2|x_i-x_i^*| \leq \alpha^2+2\alpha, \text{ for } i=1,\ldots,m.
\end{align*}
\amirali{Applying Lemma~\ref{lem:g.lb} and recalling that $C=1,$ we conclude that
\begin{equation} \label{eq:g.lb}
 g(s)\leq g(x)+48m^2(\alpha^2+2\alpha).    
\end{equation}}
We can now upper bound $g(x)$ using Lemma~\ref{lem:Lipschitz}. As $$|x_i-x_i^*|\leq \alpha, \forall i=1,\ldots,m,$$ and $x^* \in \{\pm 1\}^m$, we have that $x, x^* \in [-1-\alpha,1+\alpha]^m.$ As $\alpha<1$, Lemma~\ref{lem:Lipschitz} implies that $$|g(x)-g(x^*)|=|g(x)| \leq (64m^2+24mN^2\alpha)||x-x^*||\leq (64m^2+24mN^2\alpha)\alpha,$$
where we have used the fact that $g(x^*)=0$ as $\nabla_y p(x^*,y^*,z^*)=0.$ Plugging this back into \eqref{eq:g.lb} and substituting the values of $N$ and $\alpha$, we get:
\begin{align*}
g(s) &\leq (64m^2+24mN^2\alpha)\cdot \alpha +48m^2(\alpha^2+2\alpha)\\
&=\frac{64m^2}{12000n^{3.5}}+\frac{24m \cdot 160^2 m^4(m^2+1)^2}{12000^2n^7}+48m^2 \left(\frac{1}{12000^2n^7}+\frac{2}{12000n^{3.5}}\right)\\
&\leq \frac{64m^2 2^{3.5}}{12000(m+1)^{7}}+\frac{24 \cdot 160^2 m^5 (m^2+1)^2\cdot 2^7}{12000^2(m+1)^{14}}+48m^2 \left(\frac{2^7}{12000^2(m+1)^{14}}+\frac{2\cdot 2^{3.5}}{12000(m+1)^{7}}\right)\\
&\leq \frac{0.1}{(m+1)^5}+\frac{0.6}{(m+1)^5}+\frac{0.01}{(m+1)^{12}}+\frac{0.1}{(m+1)^5}<\frac{1}{(m+1)^4},
\end{align*}
where we have used the \amirali{inequality} $n \geq \frac{(m+1)^2}{2}$ in the third line and $(m^2+1)\leq (m+1)^2$ in the fourth.

Now, let $z\in \{\pm 1\}^m$ and suppose that $z$ is not an indicator \amirali{vector} of a clique of size $k$ in $G$. We have
$$g(z) \geq \frac{C^2}{(m^2+1)^2}=\frac{1}{(m^2+1)^2}\geq \frac{1}{(m+1)^4},$$
from Lemma~\ref{prop:lb.g.s}, where we have used the fact that \amirali{$m^2+1 \leq (m+1)^2$.}
This concludes the proof.
\end{proof}


 \subsubsection*{Discussion around Theorem~\ref{thm:lower.bound.approx.cubic}.} Unless P=NP, Theorem~\ref{thm:lower.bound.approx.cubic} precludes the existence of an algorithm \amirali{that} finds an $\epsilon$-near critical point of a function in $n$ variables in time polynomial in $(n,\frac{1}{\epsilon})$ \amirali{(as well as in the natural
description size of the function instance). Indeed, if such an algorithm existed, then by setting} $\epsilon=\frac{1}{12000n^{3.5}},$ \amirali{the resulting algorithm would be polynomial in $n$ and the description size of the instance, contradicting Theorem~\ref{thm:lower.bound.approx.cubic}.} Note that since the size of $\epsilon$ is $\log(1/\epsilon),$ the above statement in fact rules out algorithms that have exponential running time in the size of $\epsilon.$
 

\subsection{Complexity of computing an $\epsilon$-near critical point of a lower bounded function} \label{subsec:near.critical.lb}

In \ghall{a} \aaa{parallel} to Section \ref{subsec:eps.approx.quartic}, we show that computing an $\epsilon$-near critical point of a function $f$ remains hard under the additional assumption that $f$ is lower bounded. 

\begin{theorem} \label{thm:lower.bound.approx}
If there is an algorithm $\mathcal{A}$ that takes as input \amirali{a lower bounded function $f$, which is the product of an $n$-variate quartic polynomial and an exponential function in one variable,}
runs in polynomial time, and is such that
\begin{enumerate}[(i)]
\item when $f$ has no critical point, $\mathcal{A}$ is free to return an abitrary point,
\item when $f$ has a critical point, $\mathcal{A}$ returns a $\frac{1}{1000 \cdot n^{7}}$-near critical point,
\end{enumerate}
then P=NP.
\end{theorem}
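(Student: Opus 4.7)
\textbf{Proof sketch for Theorem~\ref{thm:lower.bound.approx}.} My plan is to prove the theorem by a polynomial-time reduction from \textsc{Clique}, combining the exponential trick from the proof of Proposition~\ref{thm:critical.point.spurious} with the quartic sum-of-squares polynomial $g=g_{G,1,k}$ from \eqref{eq:def.g}. Given a graph $G$ on $m$ vertices and an integer $k\in\{1,\ldots,m\}$, I would introduce a new variable $w\in\mathbb{R}$ and define
\[
f(x,w)\mathrel{\mathop{:}}= e^{w}\cdot g_{G,1,k}(x).
\]
Since $g\geq 0$ (being a sum of squares) and $e^{w}>0$, the function $f$ is lower bounded by zero; it is a product of an $m$-variate quartic polynomial and an exponential in the single variable $w$; and the reduction is of polynomial length in the size of $(G,k)$, as the coefficients of $g$ are polynomially bounded in $m$.

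The key step is to characterize the critical points of $f$. Since $\nabla_{x} f=e^{w}\nabla g(x)$ and $\nabla_{w} f=e^{w}g(x)$, and since $e^{w}>0$, any critical point must satisfy $g(x)=0$ and $\nabla g(x)=0$. But because $g\geq 0$ everywhere, $g(x)=0$ already forces $x$ to be a global minimizer of $g$, so $\nabla g(x)=0$ holds automatically. Invoking Lemma~\ref{prop:clique.zeros.g}, the critical points of $f$ are thus exactly the pairs $(x^{*},w)$ with $x^{*}\in\{\pm 1\}^{m}$ an indicator vector of a clique of size $k$ in $G$ and $w\in\mathbb{R}$ arbitrary; in particular, $f$ has a critical point if and only if $G$ has a clique of size $k$.

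Given the output $(\bar{x},\bar{w})$ returned by $\mathcal{A}$, I would compute $s\mathrel{\mathop{:}}=sgn(\bar{x})$ and check in polynomial time whether $s$ is the indicator vector of a clique of size $k$ in $G$. If $G$ does have such a clique, then $(\bar{x},\bar{w})$ must be within distance $\tfrac{1}{1000 n^{7}}$ of some critical point $(x^{*},w^{*})$; this forces $|\bar{x}_{i}-x^{*}_{i}|<1$ for each $i$ and hence $sgn(\bar{x})=x^{*}$, so the check succeeds and recovers the clique. If $G$ has no clique of size $k$, no element of $\{\pm 1\}^{m}$ is a clique indicator of size $k$ and the check necessarily fails. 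Either way, we decide \textsc{Clique} in polynomial time, forcing P=NP.

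I expect the only real subtlety to be confirming that multiplication by $e^{w}$ does not introduce unintended critical points, either along the $w$-direction or as a result of $f\to 0$ when $w\to -\infty$; the identity $\nabla_{w} f=e^{w}g(x)$ handles this cleanly, since it vanishes only where $g(x)=0$ exactly and not merely in the limit. The explicit bound $\tfrac{1}{1000 n^{7}}$ is in fact considerably looser than needed for this construction---any $\alpha<1$ would suffice, because clique indicators lie in $\{\pm 1\}^{m}$ and so are pairwise at distance at least $2$ apart coordinatewise---and it is presumably chosen for uniformity with Theorem~\ref{thm:lower.bound.approx.cubic} and to support a corollary ruling out algorithms whose running time is polynomial in $n$ and $1/\epsilon$.
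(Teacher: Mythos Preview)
Your construction and overall logic coincide with the paper's: both set $f(x,w)=e^{w}\,g_{G,1,k}(x)$ with $n=m+1$, observe that critical points of $f$ are exactly pairs $(x^{*},w^{*})$ with $g(x^{*})=0$, and decide \textsc{Clique} by checking whether $sgn(\bar{x})$ is a clique indicator. Where you diverge is in the rounding step. The paper, mirroring its proof of Theorem~\ref{thm:lower.bound.approx.cubic}, invokes Lemma~\ref{lem:g.lb} and the Lipschitz bound of Lemma~\ref{lem:Lipschitz} to show $g(sgn(\bar{x}))<\tfrac{1}{(m+1)^{4}}$, and then contrasts this with the lower bound $g(z)\geq\tfrac{1}{(m+1)^{4}}$ from Lemma~\ref{prop:lb.g.s} for non-clique vectors $z$. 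You instead note that the zeros of $g$ lie in $\{\pm 1\}^{m}$ (Lemma~\ref{prop:clique.zeros.g}), so any critical point has $x^{*}\in\{\pm 1\}^{m}$; since $\alpha<1$ forces $|\bar{x}_{i}-x_{i}^{*}|<1$ coordinatewise, one gets $sgn(\bar{x})=x^{*}$ directly. This is a genuinely more elementary argument that bypasses Lemmas~\ref{lem:g.lb}, \ref{prop:lb.g.s}, and \ref{lem:Lipschitz} entirely, and your observation that any $\alpha<1$ suffices is correct (and in fact applies equally to Theorem~\ref{thm:lower.bound.approx.cubic}, where $\nabla_{y}p=0$ already pins $x^{*}$ to $\{\pm 1\}^{m}$). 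The paper's route has the virtue of reusing machinery uniformly across its results and making the quantitative dependence explicit, but for proving the theorem as stated your shortcut is cleaner. One cosmetic point: to match the theorem's phrasing you should declare $n=m+1$ and regard $g$ as an $n$-variate quartic that is constant in $w$.
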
 

\amirali{It is worth remarking that despite} the factors in Theorems \ref{thm:lower.bound.approx.cubic} and \ref{thm:lower.bound.approx} being different, the conclusion in the discussion around Theorem \ref{thm:lower.bound.approx.cubic} \aaa{(see above)} applies \aaa{identically} to Theorem \ref{thm:lower.bound.approx}, i.e., \aaa{to} the lower bounded case.


\begin{proof}[Proof of Theorem \ref{thm:lower.bound.approx}.] We prove that such an algorithm would solve \textsc{Clique} in polynomial time. Let $G$ be a graph \amirali{on} $m$ \amirali{vertices} and with adjacency matrix $A$ and let $k \in \{1,\ldots,m\}.$ \amirali{Let
$n=m+1,$} $N=160m^2(m^2+1),$ and \amirali{$C=1$}. Define the $n$-variate function $$f(x,w)=e^w \cdot g(x)$$ in variables $(x,w) \in \amirali{\mathbb{R}^m} \times \mathbb{R}$, where $g(x)$ is as given in \eqref{eq:def.g}. Clearly, $f$ is lower bounded by zero and, as mentioned in the proof of Proposition~\ref{thm:critical.point.spurious}, the reduction from $(G,k)$ to $f$ is polynomial in length. The gradient of $f$ is given by $$\nabla f(x,w)=\begin{pmatrix} e^w \nabla g(x) \\ e^w g(x) \end{pmatrix}.$$

Let $\alpha=\frac{1}{1000n^7}$. Similarly to the proof of Theorem~\ref{thm:lower.bound.approx.cubic}, we show the following claim: if $(x,w)$ is an  $\alpha$-near critical point of $f$, then $sgn(x)$ is the indicator vector of a clique of size $k$. The theorem follows in an identical fashion to what is described in the proof of Theorem~\ref{thm:lower.bound.approx.cubic}.

We show the claim now. Let $(x,w)$ be an $\alpha$-near critical point of $f$, i.e., $$||(x,w)-(x^*,w^*)|| \leq \alpha,$$ where $(x^*,w^*)$ is a critical point of $f.$ Note that any critical point $(x^*,w^*)$ of $f$ must be such that $g(x^*)=0.$ To prove the claim, we show first that $g(sgn(x)) <\frac{1}{(m+1)^4}$ and then that $g(z)\geq \frac{1}{(m+1)^4}$ for any $z \in \{-1,1\}^m$ \amirali{which} is not an indicator \amirali{vector} of a clique of size $k$ in $G$. 

Reprising identical arguments to the proof of Theorem~\ref{thm:lower.bound.approx.cubic}, it can be shown that $$|x_i^2-1| \leq \alpha^2+2\alpha, \forall i=1,\ldots,m.$$ Using Lemma~\ref{lem:g.lb} and Lemma~\ref{lem:Lipschitz}, we obtain analogously
$$g(sgn(x)) \leq g(x)+48m^2(\alpha^2+2\alpha) \leq L\alpha+48m^2(\alpha^2+2\alpha),$$
where $L=64C^2m^2+24C^2mN^2\alpha.$ Substituting in the values of \amirali{$\alpha, C,$ and $n$, we get}
\begin{align*}
g(sgn(x)) &\leq \frac{64m^2}{1000(m+1)^7}+\frac{24m\cdot 160^2m^4(m^2+1)^2}{1000^2(m+1)^{14}}+48m^2\left( \frac{1}{1000^2(m+1)^{14}}+\frac{2}{1000(m+1)^7}\right)\\
&\leq \frac{0.1}{(m+1)^5}+\frac{0.65}{(m+1)^5}+\frac{0.01}{(m+1)^{12}}+\frac{0.1}{(m+1)^5} <\frac{1}{(m+1)^4},
\end{align*}
where we have used the fact that $m^2+1 \leq (m+1)^2.$

Now let $z\in \{-1,1\}^m$ and suppose that $z$ is not an indicator \amirali{vector} of a clique of size $k$ in $G$. From Lemma~\ref{prop:lb.g.s}, \amirali{it follows that} $g(z) \geq \frac{1}{(m^2+1)^2} \geq \frac{1}{(m+1)^4}.$ This \amirali{completes} the proof.
\end{proof}

\section{Concluding remarks} \label{sec:discussion.concl}
\aaa{We have shown that computing even poor-quality approximations of critical points is intractable for simple classes of nonconvex functions, and that this remains true when \amirali{the} existence and uniqueness of a critical point are guaranteed or \amirali{when} the function is lower bounded.} \aaa{It is possible that future work could make our hardness results even more negative, for example, by utilizing hardness of approximation results for \textsc{Clique}~\citep{Hastad1996Clique}}. \aaa{On the positive side, we believe our} results motivate research in the following direction: What are classes of parametric functions and structural assumptions on these functions such that $(i)$ $\epsilon$-approximate or $\epsilon$-near critical points can be computed for this function class in time polynomial in $\log(1/\epsilon)$ and the size of the parameters, and $(ii)$ the structural property on the function can be checked in time polynomial in the size of the parameters? For example, a recent result \citep{slot2025hesse} implies that an $\epsilon$-approximate and $\epsilon$-near critical point for convex polynomials of a fixed degree can be computed in time polynomial in $\log(1/\epsilon)$ and the size of the coefficients of the polynomial, in line with $(i).$ However, the structural property of convexity does not satisfy $(ii)$ for polynomials of degree 4 or higher \citep{ahmadi2013np}, though there are sufficient conditions for convexity that do \aaa{(see, e.g.,~\citep[Definition 5]{ahmadi2018dc})}. What are other function classes and structural properties that make approximate computation of critical points tractable in the \aaa{formal} sense that we have specified above?


\setcitestyle{authoryear,round}
\bibliographystyle{plainnat}
\bibliography{sample} 

\end{document}